\newtheorem{teor}{Theorem}
\newtheorem{lema}[teor]{Lemma}
\newtheorem{coro}[teor]{Corollary}
\newtheorem{rem}[teor]{Remark}
\newtheorem{ejem}[teor]{Example}
\newcommand{\caja}{\null\hfill\rule{2mm}{2mm}}
\title{Constant mean curvature spacelike hypersurfaces in Lorentzian warped products and Calabi-Bernstein type problems}
\author{Juan A. Aledo${}^{a}$, Alfonso
Romero${}^{b*}$ and Rafael M. Rubio${}^{c*}$ \\[6mm]
${}^a$ Departamento de Matem\'aticas, E.S.I. Inform\'atica, \\[0.5mm] Universidad de
Castilla-La Mancha, 02071 Albacete, Spain,\\ E-mail\textup{:
\texttt{juanangel.aledo@uclm.es}
}\\[3mm]
${}^b$ Departamento de Geometr\'\i a y Topolog\'\i a, \\ [0.5mm]
Universidad de Granada, 18071 Granada, Spain \\ E-mail\textup{:
\texttt{aromero@ugr.es}} \\[3mm]
${}^c$ Departamento de Matem\'aticas, Campus de Rabanales, \\[0.5mm] Universidad de
C\'ordoba, 14071 C\'ordoba, Spain,\\[0.5mm] E-mail\textup{: \texttt{rmrubio@uco.es}}\\[3mm]}
\date{}
\begin{document}

\maketitle

\thispagestyle{empty}

\begin{abstract}
In this paper we provide several uniqueness and non-existence
results for complete parabolic constant mean curvature spacelike
hypersurfaces in Lorentzian warped products under appropriate
geometric assumptions. As a consequence of this parametric study, we
obtain very general uniqueness and non-existence results for a large
family of uniformly elliptic EDP's, so solving the Calabi-Bernstein
problem in a wide family of spacetimes.
\end{abstract}
\vspace*{5mm}

\noindent \textbf{MSC 2010:} 58J05, 53C42, 53C50

%%\noindent \textbf{PACS Codes:} 02.40-k, 02.40.ky, 04.40.Nr

\section{Introduction}

The study of spacelike hypersurfaces with constant mean curvature
(CMC) in Lorentzian manifolds has attracted the interest of a
considerable group of geometers as evidenced by the amount of works
that it has generated. This is due not only to its mathematic
interest, but also to its relevance in General Relativity;  a
summary of several reasons justifying its interest can be found in
\cite{M-T}. In particular, hypersurfaces of (non-zero) CMC are
particularly suitable for studying the propagation of gravity
radiation \cite{S}. Classical papers dealing with uniqueness results
on CMC spacelike hypersurfaces are, for instance, \cite{BF},
\cite{Ch} and \cite{M-T}. In \cite{BF}, Brill and Flaherty
considered a spatially closed universe, and proved several
uniqueness results on CMC hypersurfaces in the large by assuming
that the Ricci curvature of the spacetime satisfies that
$\overline{{\mathrm{Ric}}}(z,z)>0$ for all timelike vector $z$. In
\cite{M-T}, this energy condition was relaxed by Marden and Tipler
to include, for instance, non-flat vacuum spacetimes. Later, Bartnik
proved in \cite{Bar} very general existence theorems on CMC
spacelike hypersurfaces, and claimed that it would be useful to find
new satisfactory uniqueness results. More recently, in \cite{A-R-S1}
Alias, Romero and Sanchez proved new uniqueness results for CMC
hypersurfaces in the class of spacetimes that they call closed
generalized Robertson-Walker spacetimes (which includes the
spatially closed Robertson-Walker spacetimes), under the Temporal
Convergence Condition (TCC). Finally, in \cite{RRS}, Romero, Rubio
and Salamanca, have provided some uniqueness results for the maximal
case (zero mean curvature) in spatially parabolic generalized
Robertson-Walker spacetimes, which are open models whose fiber is a
parabolic Riemannian manifold.

In the paradigmatic case of CMC hypersurfaces immersed in the
Lorentz-Minkowski space $\mathbb{L}^{n+1}$, $n\geq 2$, there is a
great variety of results from different points of view. One of the
most celebrated results is the solution to the corresponding
Bernstein problem for maximal hypersurfaces, known as
Calabi-Bernstein problem in this Lorentzian context, by Calabi
($n\leq 4$) \cite{Ca}, and Cheng and Yau (arbitrary $n$) \cite{CY}.
As for the case of nonzero constant mean curvature, many nonlinear
examples of complete spacelike hypersurfaces with nonzero constant
mean curvature can be constructed (see for instance \cite{HN},
\cite{IH}, \cite{Tr}). In \cite{AA} the spacelike hyperplanes in
$\mathbb{L}^{n+1}$ are characterized as the only complete CMC
spacelike hypersurfaces which are bounded between two parallel
spacelike hyperplanes. On the other hand, Aiyama \cite{Ai} and Xin
\cite{Xi} simultaneous and independently characterized spacelike
hyperplanes as the only complete CMC spacelike hypersurfaces in
$\mathbb{L}^{n+1}$ whose image under the Gauss map is bounded in the
hyperbolic $n$-space (see also \cite{Pa} for a weaker first version
of this result given by Palmer). Recall that the Gauss application
$N$ of a spacelike hypersurface $M$ immersed in $\mathbb{L}^{n+1}$
can be thought as an application from $M$ into the hyperbolic space
$\mathbb{H}^{n}\subset \mathbb{L}^{n+1}$. Thus, the Gauss
application is bounded if and only if the hyperbolic angle between
$N$ and the oriented timelike axis is also bounded.

In the general case of a spacelike hypersurface in a Lorentzian
manifold $\overline{M}$, the Gauss map of $M$ can be globally
defined provided $\overline{M}$ is time-orientable. Although in this
context it has not sense talking about bounded Gauss application,
once we choose a unitary timelike vector field globally defined on
$\overline{M}$ (compatible with the time-orientation), the notion of
hyperbolic angle can be naturally defined (see Section \ref{s2} for
the details). Thus, the assumption of bounded hyperbolic angle is
the natural extension of the one used by Aiyama and Xin in their
result, and actually it has been used in this context \cite{CRR1}, \cite{CRR2}.

In this paper we consider a wide family of Lorentzian manifolds,
given by the warped product of an 1-dimensional manifold endowed
with a negative definite metric and an $n$-dimensional ($n\geq 2$)
Riemannian manifold which, in general, will be taken complete and
non-compact. Note that the classical spatially open Robertson-Walker
cosmological models are included in that family.

In these ambient spaces there exists a distinguished unitary
timelike vector field globally defined which allows to naturally
define the notion of hyperbolic angle for every immersed spacelike
hypersurface. Thus, one of our main aims will be to provide
characterizations of complete spacelike hypersurfaces with bounded
hyperbolic angle under suitable geometric hypothesis (for instance,
energy-type conditions) on both the ambient space and the
hypersurface. As will be pointed out, the family of warped products
for which our results are applicable is very large and contains
notable examples.

More precisely,  given an $n(\geq 2)$-dimensional (connected)
Riemannian manifold $(F,g_{_F})$, an open interval $I$ in
$\mathbb{R}$ and a positive smooth function $f$ defined on $I$, the
product manifold $I \times F$ endowed with the Lorentzian metric
\[
\bar{g} = -\pi^*_{_I} (dt^2) +f(\pi_{_I})^2 \, \pi_{_F}^* (g_{_F})
\, ,
\]
where $\pi_{_I}$ and $\pi_{_F}$ denote the projections onto $I$ and
$F$, respectively, is a \emph{Lorentzian warped product} in the
sense of \cite[cap. 7]{O'N}. This kind of Lorentzian manifolds are
also known as Generalized Robertson-Walker (GRW) spacetimes in the
physical context \cite{A-R-S1}. Along this paper we will represent
this $(n+1)$-dimensional Lorentzian manifold by $\overline{M}= I
\times_f F$. When $n=3$ and the fiber $F$ has constant sectional
curvature, $\overline{M}= I \times_f F$ is known as a
Robertson-Walker (RW) spacetime. Note that a RW spacetime obeys the
\emph{cosmological principle}, i.e. it is spatially homogeneous and
spatially isotropic, at least locally.  Thus, GRW spacetimes widely
extend to RW spacetimes and include, for instance, the
Lorentz-Minkowski spacetime, the Einstein-de Sitter spacetime, the
Friedmann cosmological models, the static Einstein spacetime and the
de Sitter spacetime. GRW spacetimes are useful to analyze if a
property of a RW spacetime $M$ is \emph{stable}, i.e. if it remains
true for spacetimes close to $M$ in a certain topology defined on a
suitable family of spacetimes \cite{Geroch}. In fact, a deformation
$s \mapsto g_{_F}^{(s)}$ of the metric of $F$ provides a one
parameter family of GRW spacetimes close to $M$ when $s$ approaches
to $0$. Note that a conformal change of the metric of a GRW
spacetime, with a conformal factor which only depends on $t$,
produces a new GRW spacetime. On the other hand, a GRW spacetime is
not necessarily spatially homogeneous. Recall that spatial
homogeneity seems appropriate just as a rough approach to consider
the universe in the large. However, this assumption could not be
realistic when the universe is considered in a more accurate scale.
Thus, a GRW spacetime could be a suitable spacetime to model a
universe with inhomogeneous spacelike geometry \cite{Ra-Sch}.

When the fiber $F$ is a compact (without boundary) Riemannian
manifold, the Lorentzian warped product $\overline{M}= I \times_f F$
is said to be spatially \emph{closed}. On the other hand, if $F$ is
complete and non compact, we will say that $\overline{M}$ is
spatially \emph{open}. In this last case, if moreover $F$ is
parabolic then $\overline{M}$ is said to be \emph{spatially
parabolic} \cite{RRS}. The open case is especially interesting
since, unlike that in the closed one, it can be compatible with the
inflation hypothesis and the holographic principle \cite{Bak-Rey},
\cite{Bo}. Moreover, in a spatially open Lorentzian warped product
$\overline{M}= I \times_f F$, the boundedness of the hyperbolic
angle of a spacelike hypersurface $M$ has a physical interpretation.
In fact, consider the unitary normal vector field $N$ on $M$ and the
unit timelike vector field $\mathcal{T}_{p}:=-\partial_t$ (the sign
minus depends on the chosen time orientation). Along $M$ there exist
two families of \emph{instantaneous observers} $\mathcal{T}_{p}$,
$p\in M$, and the normal observers $N_p$. The quantities $$\cosh
\varphi (p) \quad \text{and} \quad
v(p):=\left(\frac{1}{\cosh\varphi(p)}\right)\, N_p^F,$$ where
$N_p^F$ is the projection of $N_p$ onto $F$ and $\varphi$ the
hyperbolic angle of $M$, are respectively the \emph{energy} and the
\emph{velocity} that $\mathcal{T}_p$ measures for $N_p$, and we have
$\vert v\vert= \tanh\varphi$ on $M$, \cite[pp. 45,67]{Sa-Wu}.
Therefore the \emph{relative speed function} $\vert v\vert$ is
bounded on $M$ and, hence, it does not approach to the speed of
light in vacuum.

Our paper is organized as follows. In Section \ref{s2} we introduce
the notation to be used for spacelike hypersurfaces in Lorentzian
warped products. Section \ref{parabolicity} is devoted to revise
some results regarding the parabolicity of Riemannian manifolds,
paying special attention to show under which assumption this
condition can be deduced for a spacelike hypersurface of a
Lorentzian warped product from suitable assumptions of the ambient
space. In Section \ref{tr} we provide an inequality (see Lemma \ref
{tocho}) involving the hyperbolic angle of a CMC spacelike
hypersurface immersed in a Lorentzian warping product satisfying the
TCC. This inequality will be the key for obtaining our results. In
Section \ref{pr} we present several uniqueness and non-existence
results for complete parabolic CMC spacelike hypersurfaces in
Lorentzian warped products under appropriate assumptions. In
particular, we widely generalized the results of Aiyama-Xin
commented above. Finally, in Section \ref{CBr} we apply our
parametric results to the study of several Calabi-Bernstein type
problems in this context. Observe that, unlike that in the case of
entire graph into a Riemannian product space, an entire spacelike
graph in a Lorentzian (or warped Lorentzian) product is no
necessarily complete, in the sense that the induced Riemannian
metric is not necessarily complete on the graph. As a non direct
application of the parametric case, we obtain very general
uniqueness and non-existence results  for a wide family of uniformly
ellyptic EDP's (see Equation (E.1)+(E.2)).

\section{Preliminaries}
\label{s2} \noindent

Let $(F,g_{_F})$ be an $n(\geq 2)$-dimensional (connected)
Riemannian manifold, $I$ an open interval in $\mathbb{R}$ and $f$ a
positive smooth function defined on $I$. Then, the product manifold
$I \times F$ endowed with the Lorentzian metric
\begin{equation}\label{metrica}
\bar{g} = -\pi^*_{_I} (dt^2) +f(\pi_{_I})^2 \, \pi_{_F}^* (g_{_F})
\, ,
\end{equation}
where $\pi_{_I}$ and $\pi_{_F}$ denote the projections onto $I$ and
$F$, respectively, is called a \emph{Lorentzian warped product} with
\emph{fiber} $(F,g_{_F})$, \emph{base} $(I,-dt^2)$ and \emph{warping
function} $f$. Along this paper we will represent this
$(n+1)$-dimensional Lorentzian manifold by $\overline{M}= I \times_f
F$.

In any Lorentzian warped product $\overline{M}=I\times_f F$, the
coordinate vector field $\partial_t:=\partial/\partial t$ is
(unitary) timelike, and hence $\overline{M}$ is time-orientable.
Thus,  the timelike vector field $K: = f({\pi}_I)\,\partial_t$ is
also timelike. Moreover, from the relationship between the
Levi-Civita connections of $\overline{M}$ and those of the base and
the fiber \cite[Cor. 7.35]{O'N}, it follows that
\begin{equation}\label{conexion} \overline{\nabla}_XK =
f'({\pi}_I)\,X
\end{equation}
for any $X\in \mathfrak{X}(\overline{M})$, where $\overline{\nabla}$
is the Levi-Civita connection of the Lorentzian metric
(\ref{metrica}).

Given an $n$-dimensional manifold $M$, an immersion $\psi: M
\rightarrow \overline{M}$ is said to be \emph{spacelike} if the
Lorentzian metric (\ref{metrica}) induces, via $\psi$, a Riemannian
metric $g_{_M}$ on $M$. In this case, $M$ is called a spacelike
hypersurface. We will denote by $\tau=\pi_I\circ \psi$ the
restriction of $\pi_I$ along $\psi$.

The time-orientation of $\overline{M}$ allows to take, for each
spacelike hypersurface $M$ in $\overline{M}$, a unique unitary
timelike vector field $N \in \mathfrak{X}^\bot(M)$ globally defined
on $M$ with the same time-orientation as $-\partial_t$, i.e., such
that $\bar{g}(N,K)\geq f(\tau):=f\circ \tau>0$ and $\bar{g}(N,K)=
f(\tau)$ at a point $p\in M$ if and only if $N = -\partial_t$ at
$p$. We will denote by $A$ the shape operator associated to $N$.
Then the \emph{mean curvature function} associated to $N$ is given
by $H:= -(1/n) \mathrm{trace}(A)$. As is well-known, the mean
curvature is constant if and only if the spacelike hypersurface is,
locally, a critical point of the $n$-dimensional area functional for
compactly supported normal variations, under certain constraint of
the volume. When the mean curvature vanishes identically, the
spacelike hypersurface is called a \emph{maximal} hypersurface.

For a spacelike hypersurface $\psi: M \rightarrow \overline{M}$ with
Gauss map $N$, the \emph{hyperbolic angle} $\varphi$, at any point
of $M$, between the unit timelike vectors $N$ and $-\partial_t$, is
given by $\bar{g}(N,\partial_t)=\cosh \varphi$. By simplicity,
throughout this paper we will refer to $\varphi$  as the
\emph{hyperbolic angle function} on $M$.

In any Lorentzian warped product $\overline{M}= I \times_f F$ there
is a remarkable family of spacelike hypersurfaces, namely its
spacelike slices $\{t_{0}\}\times F$, $t_{0}\in I$. It can be easily
seen that a spacelike hypersurface in $\overline{M}$ is a (piece of)
spacelike slice if and only if the function $\tau$ is constant.
Furthermore, a spacelike hypersurface in $\overline{M}$ is a (piece
of) spacelike slice if and only if the hyperbolic angle $\varphi$
vanishes idenentically. The shape operator of the spacelike slice
$\tau=t_{0}$ is given by $A=f'(t_{0})/f(t_{0})\,I$, where $I$
denotes the identity transformation, and therefore its (constant)
mean curvature is $H=- f'(t_{0})/f(t_{0})$. Thus, a spacelike slice
is maximal if and only if $f'(t_{0})=0$ (and hence, totally
geodesic).

We will say that the spacelike hypersurface is contained in a
\emph{slab}, if it is contained between two slices. Analogously, we
will say that the hypersurface is contained in an \emph{open slab}
if it is contained in the unbounded region determined for a slice,
i.e. if there exists $t_{0}\in I$ such that $\tau>t_0$ (resp.
$\tau<t_0$) on $M$.

\section{Parabolic Riemannian manifolds}\label{parabolicity}
Because of its appearance  in so many applications (Laplace
equation, Helmholtz equation, etc., see for instance \cite{So}) one
of the central problems in mathematics is to understand the equation
$Lu=f$ on a Riemannian manifold, where $Lu=\Delta u+au$ denotes the
Schrodinger operator and $a$, $f$ are two given function. More
generally, the family of equations $\Delta u+ah(u)=f$ naturally
arises in geometry in several contexts. For instance, given $(M,g)$
a 2-Riemannian manifold and $g_1=e^{2u}g$ a metric pointwise
conformal to $g$, if $K$ and $K_1$ denote the Gaussian curvatures
for $g$ and $g_1$, respectively, then $\Delta u+K_1e^{2u}=K$ where
$\Delta$ is the Laplacian operator for the metric $g$.

Although this kind of equations is fairly well studied on a compact
manifold without boundary, very little is known in the complete but
non compact case. A non-compact Riemannian manifold is said to be
\emph{parabolic} if it does not admit non-constant positive
superharmonic functions (see \cite{Kazdan}, for instance). Hence,
the rich geometric analysis of compact Riemannian manifolds is
preserved, in some sense, for complete parabolic manifolds.
Moreover, the parabolicity is part of the more general problem of
understanding the lack of uniqueness of solutions to the
aforementioned equations.

In the two dimensional case, this notion is very close to the
classical parabolicity for Riemann surfaces. Moreover, it is
strongly related to the behavior of the Gaussian curvature $K$ of
the surface, since every complete Riemannian  surface with $K\geq 0$
is parabolic (see \cite{A}).

Recall also that if $S$ is a complete surface and there exists a
point $p_0\in S$ and a positive constant $r_0$ such that $K(p)\geq
\frac{-1}{r^2(p)\log r(p)}$ for all $p\in S$ such that $r(p)={\rm
dist}(p_0,p)\geq r_0$, then $S$ is parabolic \cite{GW}. On the other
hand,  if the negative part of $K$  is integrable on a complete
surface $S$, then $S$ is parabolic \cite{Li}.

For higher dimensions, parabolicity of Riemannian manifolds is quite
different and, in particular, it has not a so direct relation with
the sectional curvature of the manifold. In fact, the Euclidean
space $\mathbb{R}^n$ is parabolic if and only if $n\leq 2$. On the
other hand, if $(M_1,g_1)$ is a compact Riemannian manifold and
$(M_2,g_2)$ is a parabolic Riemannian manifold, then $M_1\times M_2$
endowed with the product metric $g_1 + g_2$ is parabolic,
\cite{Kazdan}. This also works for certain warped products (see
\cite{Kazdan} for the details). In particular, the product of a
compact Riemannian manifold and the real line $\mathbb{R}$  or the
Euclidean plane $\mathbb{R}^2$ is always a parabolic Riemannian
manifold.

Parabolicity is also closely related with the volume growth of the
geodesic balls in an $n(\geq 2)$-dimensional non-compact complete
Riemannian manifold. Indeed, if the volume growth of the geodesic
balls is \emph{moderate}, then the Riemannian manifold is parabolic
(see \cite{Kar} for the details).

The study of immersed Riemannian submanifolds (with the induced
metric) in Lorentzian ambient spaces is important not only from a
mathematical point of view, but also due to its relevant role in
General Relativity. In particular, the case with codimension 1
(spacelike hypersurfaces) is crucial. Lorentzian warped products
which admit a complete parabolic spacelike hypersurface have been
studied in \cite{RRS}, where the following result is proved:

\begin{quote} {\it Let $M$ be a complete spacelike hypersurface
in a Lorentzian warped product $\overline{M}= I \times_f F$, whose
fiber has parabolic universal Riemannian covering. If the hyperbolic
angle of $M$ is bounded and the restriction $f(\tau)$ on $M$ of the
warping function $f$ satisfies:
\begin{itemize}
\item[i)] $\sup f(\tau)<\infty$, and
\item[ii)] $\inf f(\tau)>0,$
\end{itemize}
then, $M$ is parabolic.

Conversely, let $\overline{M}= I \times_f F$ be a Lorentzian warped
product such that its warping function $f$ satisfies i) and ii). If
$\overline{M}$ admits a simply connected parabolic spacelike
hypersurface $M$ whose hyperbolic angle is bounded, then the
universal Riemannian covering of the fiber of $\overline{M}$ is
parabolic. }
\end{quote}

Observe that in the particular case of Lorentzian warped products $\overline{M}= I \times_f F$
with a simply connected complete parabolic Riemannian fiber, the
assumption on the universal Riemannian covering is trivially
satisfied. Therefore, every Lorentzian warped product with a simply
connected complete parabolic Riemannian fiber, whose warping
function is bounded from above and has non-zero infimum, satisfies
the assumptions on the ambient space in the previous result. This is
the case, for instance, of Lorentzian products (i.e. with warping
function equal to 1) whose fiber is a simply connected complete
parabolic Riemannian manifold.

\begin{ejem}
Taking the previous considerations into account, a complete
spacelike hypersurface with bounded hyperbolic angle in the
Lorentzian warped product $I\times_f
(\mathbb{S}^2\times\mathbb{R}^2)$, where the warping function
satisfies i) and ii), must be parabolic. Note also that on the
2-sphere $\mathbb{S}^2$ any Riemannian metric can be considered.
\end{ejem}

\section{A technical result} \label{tr}
Let $\psi: M \rightarrow \overline{M}$ be an $n$-dimensional
spacelike hypersurface immersed in a Lorentzian warped product
$\overline{M}= I \times_f F$. If we denote by
\[
\partial_t^T:= \partial_t+\overline{g}(N,\partial_t)N
\]
the tangential component of $\partial_t$ along $\psi$, then it is
easy to check that the gradient of $\tau$ on $M$ is given by
\begin{equation}\label{part}
\nabla \tau=-\partial_t^T
\end{equation}
and so
\begin{equation}\label{sinh}
|\nabla \tau|^2=g_{_M}(\nabla \tau,\nabla \tau)=\sinh^2 \varphi.
\end{equation}
Moreover, if we put $K^T=K+\overline{g}(K,N)N$ the tangential
component of $K$ along $\psi$, a direct computation from
(\ref{conexion}) gives
\begin{equation}\label{gradcosh}
\nabla \overline{g}(K,N)=-AK^T
\end{equation}
where we have used (\ref{part}), and also
\[
\nabla \cosh \varphi=-A\partial_t^T+\frac{f'(\tau)}{f(\tau)}\overline{g}(N,\partial_t)\partial_t^T.
\]

On the other hand, if we represent by $\nabla$ the Levi-Civita
connection of the metric $g_{_M}$, then the Gauss and Weingarten
formulas for the immersion $\psi$ are given, respectively, by
\begin{equation}\label{GF}
\overline{\nabla}_X Y=\nabla_X Y-g_{_M}(AX,Y)N
\end{equation}
and
\begin{equation}\label{WF}
AX=-\overline{\nabla}_X N,
\end{equation}
where $X,Y\in\mathfrak{X}({M})$. Then,  we get from
(\ref{conexion}), (\ref{GF}) and (\ref{WF}), that
\begin{equation}\label{KT}
\nabla_X K^T=-f(\tau)\overline{g}(N,\partial_t)AX+f'(\tau)X
\end{equation}
where $X\in\mathfrak{X}({M})$ and $f'(\tau):=f'\circ \tau$. Since
also $K^T=f(\tau)\partial_t^T$, it follows from (\ref{part}) and
(\ref{KT}) that the Laplacian of $\tau$ on $M$ is
\begin{equation}\label{laptau}
\Delta \tau=-\frac{f'(\tau)}{f(\tau)}\{n+|\nabla
\tau|^2\}-nH\overline{g}(N,\partial_t).
\end{equation}

Consequently
\begin{eqnarray}
\Delta f(\tau) & = & f'(\tau) \Delta \tau+f''(\tau)|\nabla \tau|^2
\nonumber \\
& = & -\frac{f'(\tau)^2}{f(\tau)} \, n + |\nabla \tau|^2 f(\tau)
(\log f)''(\tau)-n H f'(\tau)\cosh \varphi \label{lapftau}
\end{eqnarray}
and so
\begin{eqnarray}
\Delta \left( f(\tau) \cosh \varphi\right)
& = &
\cosh \varphi \, \, \Delta f(\tau)+f(\tau) \, \, \Delta \cosh \varphi+2g_{_M}(\nabla f(\tau), \nabla \cosh \varphi) \nonumber \\
& = & -\frac{f'(\tau)^2}{f(\tau)} \, n\cosh \varphi +  f(\tau) \cosh \varphi \, \, \sinh^2 \varphi \, (\log f)''(\tau)-n H f'(\tau)\cosh^2 \varphi \nonumber \\
& & +f(\tau) \, \, \Delta \cosh \varphi+2g_{_M}(A\partial_t^T,
\partial_t^T)-2\frac{f'(\tau)^2}{f(\tau)}\, \, \cosh \varphi \, \,
\sinh^2 \varphi, \label{lapfcosh}
\end{eqnarray}
where we have used (\ref{part}), (\ref{sinh}) and (\ref{gradcosh}).

On the other hand, if we assume that $M$ is a CMC hypersurface, we
get from the Codazzi equation for $M$ that (see \cite[Eq.
8]{A-R-S1})
\begin{equation}\label{ARS1995}
\Delta \left( f(\tau) \cosh \varphi\right)=\Delta
\overline{g}(K,N)=\overline{{\rm
Ric}}(K^T,N)+f'(\tau)nH+f(\tau)\cosh\varphi \,\, {\rm trace}(A^2)
\end{equation}
where $\overline{{\rm Ric}}$ stands for the Ricci tensor on
$\overline{M}$. Therefore, from (\ref{lapfcosh}) and (\ref{ARS1995})
we have
\begin{eqnarray}
\overline{{\rm Ric}}(K^T,N) & = & - f'(\tau)nH- f(\tau)\cosh\varphi \,\, {\rm trace}(A^2) \nonumber \\
& & -\frac{f'(\tau)^2}{f(\tau)} \, n\cosh \varphi +  f(\tau) \cosh \varphi \, \, \sinh^2 \varphi \, (\log f)''(\tau)-n H f'(\tau)\cosh^2 \varphi \nonumber \\
& & +f(\tau) \, \, \Delta \cosh \varphi+2g_{_M}(A\partial_t^T,
\partial_t^T)-2\frac{f'(\tau)^2}{f(\tau)}\, \, \cosh \varphi \, \,
\sinh^2 \varphi. \label{Ric1}
\end{eqnarray}

If we put $N=N_{_F}-\overline{g}(N,\partial_t^T)\partial_t$, where
$N_{_F}$ denotes the projection of $N$ on the fiber $F$, it is easy
to obtain from (\ref{metrica}) that
\begin{equation}\label{senh2}
\sinh^2\varphi=f(\tau)^2 \,\, g_{_F}(N_{_F},N_{_F}).
\end{equation}
Besides, from \cite[Chapter 7, Corollary 43]{O'N} we know that
\begin{equation}\label{ricpartial}
\overline{{\rm
Ric}}(\partial_t,\partial_t)=-n\frac{f''(\tau)}{f(\tau)}
\end{equation}
and
\begin{equation}\label{ricNF}
\overline{{\rm Ric}}(N_{_F},N_{_F})={\rm
Ric}^F(N_{_F},N_{_F})+\sinh^2\varphi
\left(\frac{f''(\tau)}{f(\tau)}+(n-1)\frac{f'(\tau)^2}{f(\tau)^2}
\right)
\end{equation}
where ${\rm Ric}^F$ stands for the Ricci tensor on $F$ and we have
used (\ref{senh2}). Then, from (\ref{ricpartial}) and (\ref{ricNF})
we obtain
\begin{eqnarray}
\overline{{\rm Ric}}(K^T,N) & = & f(\tau)\cosh\varphi \,\, \overline{{\rm Ric}}(N_{_F},N_{_F})-f\cosh \varphi \,\, \sinh^2 \varphi \,\, \overline{{\rm Ric}}(\partial_t,\partial_t) \nonumber \\
& = & f(\tau)\cosh\varphi\,\, {\rm Ric}^F(N_{_F},N_{_F})-(n-1)
f(\tau)\cosh\varphi\,\,\sinh^2\varphi \,\,(\log f)''(\tau).
\label{Ric2}
\end{eqnarray}
Finally, from (\ref{Ric1}) and (\ref{Ric2}) we get
\begin{eqnarray}
\Delta \cosh \varphi & = & nH\frac{f'(\tau)}{f(\tau)}\left(1+\cosh^2\varphi\right)
+\cosh\varphi\left( {\rm Ric}^F(N_{_F},N_{_F})-n \sinh^2\varphi \,\,(\log f)''(\tau)\right) \nonumber \\
&&+ \frac{f'(\tau)^2}{f(\tau)}\, \, \cosh \varphi \left(n+2\sinh^2
\varphi\right) +\cosh\varphi \,\, {\rm
trace}(A^2)-2\frac{f'(\tau)}{f(\tau)}\,\,g_{_M}(A\partial_t^T,
\partial_t^T). \label{wegotit}
\end{eqnarray}

On the other hand, the square algebraic trace-norm of the Hessian
tensor of $\tau$ is just
\[
\vert {\rm Hess}(\tau)\vert^2={\rm trace} (H_\tau\circ H_\tau),
\]
where $H_\tau$ denotes the operator defined by
$g_{_M}(H_\tau(X),Y):={\rm Hess}(X,Y)$ for all vector fields
$X,Y\in\mathfrak{X}({M})$ on $M$, and Hess is the Hessian operator
for the metric $g_{_M}$.

By taking the tangential component in (\ref{conexion}) and using
(\ref{part}), we get that
\begin{eqnarray}
\vert{\rm Hess}(\tau)\vert^ 2 & = & \frac{f'(\tau)^ 2}{f(\tau)^
2}\left((n-1)+\cosh^ 4\varphi\right)+\cosh^ 2\varphi\,{\rm
trace}(A^ 2)+2nH\frac{f'(\tau)}{f(\tau)}\cosh\varphi \nonumber \\
&& -2\frac{f'(\tau)}{f(\tau)}\cosh\varphi \, g_{_M}(A\partial_t^
T,\partial_t^ T). \label{hess}
\end{eqnarray}
Since $\vert{\rm Hess}(\tau)\vert^ 2\geq 0$, it is a straightforward
computation to obtain, making use of (\ref{wegotit}) and
(\ref{hess}), that
\begin{eqnarray}
\cosh \varphi\,\, \Delta \cosh \varphi & \geq &
nH\frac{f'(\tau)}{f(\tau)}\cosh\varphi\sinh^2\varphi+\cosh^2\varphi\left(
{\rm Ric}^F(N_{_F},N_{_F})-n \sinh^2\varphi \,\,(\log
f)''(\tau)\right)
\nonumber \\
&&
\hspace{-0.4cm}+n\frac{f'(\tau)^2}{f(\tau)^2}\cosh^2\varphi+2\frac{f'(\tau)^2}{f(\tau)^2}\cosh^2\varphi\sinh^2\varphi-
\frac{f'(\tau)^2}{f(\tau)^2}\left((n-1)+\cosh^4\varphi\right).
\label{wegotit2}
\end{eqnarray}

Now, let us assume that the ambient spacetime satisfies the Timelike
Convergence Condition (TCC). Recall  that a Lorentzian manifold
obeys the Timelike Convergence Condition if its Ricci tensor
$\overline{\rm Ric}$ satisfies $\overline{\rm Ric}(z,z) \geq 0$, for
all timelike vector $z$ .Observe that (see \cite[Section 2]{S99}),
 in our context, TCC is equivalent to
\[
f''\leq 0, \qquad {\rm Ric}^F(X,X)\geq n (ff''-f'^2)g_{_F}(X,X)
\]
for all $X$ tangent to the fiber $F$. In particular, since
\[
(\log f)''(\tau)=\frac{f(\tau)f''(\tau)-f'(\tau)^2}{f(\tau)^2}
\]
and using also (\ref{senh2}), it follows that
\begin{equation}\label{RF}
{\rm Ric}^F(N_{_F},N_{_F})-(n-1)(\log f)''(\tau)\sinh^2\varphi\geq
0,
\end{equation}
which jointly with (\ref{wegotit2}) yields
\begin{eqnarray*}
\cosh \varphi\,\, \Delta \cosh \varphi & \geq &
nH\frac{f'(\tau)}{f(\tau)}\cosh\varphi\sinh^2\varphi
+n\frac{f'(\tau)^2}{f(\tau)^2}\cosh^2\varphi
+3\frac{f'(\tau)^2}{f(\tau)^2}\cosh^2\varphi\sinh^2\varphi  \\
&&
-(n-1)\frac{f'(\tau)^2}{f(\tau)^2}-\frac{f'(\tau)^2}{f(\tau)^2}\cosh^2\varphi\left(1+\sinh^2\varphi\right)
\\
& \geq &
\sinh^2\varphi\Big(\frac{n}{2}H+\frac{f'(\tau)}{f(\tau)}\cosh\varphi\Big)^2
+n\sinh^2\varphi\Big(\frac{f'(\tau)^2}{f(\tau)^2}-\frac{n}{4}H^2\Big)
+\frac{f'(\tau)^2}{f(\tau)^2}\sinh^4\varphi.
\end{eqnarray*}

Summing up, we have proved the following result

\begin{lema} \label{tocho} Let $\psi: M \rightarrow \overline{M}$ be an $n$-dimensional CMC spacelike
hypersurface immersed in a Lorentzian warped product $\overline{M}=
I \times_f F$ satisfying the TCC. Then
\begin{eqnarray}\label{laplacosh}
\cosh \varphi\,\, \Delta \cosh \varphi & \geq &
\sinh^2\varphi\Big(\frac{n}{2}H+\frac{f'(\tau)}{f(\tau)}\cosh\varphi\Big)^2 \nonumber \\
&&
+n\sinh^2\varphi\Big(\frac{f'(\tau)^2}{f(\tau)^2}-\frac{n}{4}H^2\Big)
+\frac{f'(\tau)^2}{f(\tau)^2}\sinh^4\varphi.\label{fundamental}
\end{eqnarray}
\end{lema}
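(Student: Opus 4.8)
The plan is to obtain a Bochner-type lower bound for $\cosh\varphi\,\Delta\cosh\varphi$ by computing the Laplacian of the function $\overline{g}(K,N)=f(\tau)\cosh\varphi$ in two independent ways, equating the results to isolate $\Delta\cosh\varphi$, and finally discarding two manifestly nonnegative quantities controlled by the constancy of $H$ and by the TCC.

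First I would record the elementary identities for the height function $\tau=\pi_I\circ\psi$, namely $\nabla\tau=-\partial_t^T$ and $|\nabla\tau|^2=\sinh^2\varphi$ from \rf{part} and \rf{sinh}. Using \rf{conexion} together with the Gauss and Weingarten formulas \rf{GF} and \rf{WF} gives the covariant derivative \rf{KT} of $K^T$, and hence the Laplacian $\Delta\tau$ in \rf{laptau}. Differentiating once more produces $\Delta f(\tau)$ in \rf{lapftau}, and then, expanding $\Delta\big(f(\tau)\cosh\varphi\big)$ by the product rule and inserting \rf{part}, \rf{sinh} and \rf{gradcosh}, I obtain the first expression \rf{lapfcosh} for this Laplacian, which still carries the unknown term $\Delta\cosh\varphi$.

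The second route exploits that $H$ is constant. Here the Codazzi equation for $M$ gives, as in \cite[Eq.~8]{A-R-S1}, the alternative formula \rf{ARS1995} for $\Delta\overline{g}(K,N)$ in terms of $\overline{{\rm Ric}}(K^T,N)$ and ${\rm trace}(A^2)$. Equating \rf{lapfcosh} with \rf{ARS1995} solves for $\overline{{\rm Ric}}(K^T,N)$, as in \rf{Ric1}. To make this ambient Ricci term explicit on the fiber I would split $N$ into its fiber component $N_{_F}$ and a $\partial_t$ part and use O'Neill's curvature formulas \rf{ricpartial} and \rf{ricNF}, together with \rf{senh2}, to rewrite $\overline{{\rm Ric}}(K^T,N)$ solely in terms of ${\rm Ric}^F(N_{_F},N_{_F})$ and $(\log f)''(\tau)$, obtaining \rf{Ric2}. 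Comparing the two expressions for $\overline{{\rm Ric}}(K^T,N)$ then isolates $\Delta\cosh\varphi$, which is \rf{wegotit}.

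The last step turns this identity into the stated inequality by throwing away two nonnegative quantities. Computing $|{\rm Hess}(\tau)|^2$ from \rf{conexion} and \rf{part} gives \rf{hess}; since $|{\rm Hess}(\tau)|^2\geq 0$, I would use it to bound from below the shape-operator terms ${\rm trace}(A^2)$ and $g_{_M}(A\partial_t^T,\partial_t^T)$ that appear in \rf{wegotit}, which yields the estimate \rf{wegotit2}. The TCC then enters through its equivalent form $f''\leq 0$ and the fiber Ricci bound recalled from \cite[Section~2]{S99}, giving the nonnegativity \rf{RF} of the fiber-curvature contribution, which may therefore be dropped. What remains are only terms in $f'(\tau)/f(\tau)$, $\cosh\varphi$ and $\sinh\varphi$; regrouping them and completing the square to form $\left(\frac{n}{2}H+\frac{f'(\tau)}{f(\tau)}\cosh\varphi\right)^2$ rearranges the expression exactly into the right-hand side of \rf{fundamental}. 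I expect the main obstacle to be purely computational bookkeeping: carrying the numerous $\cosh\varphi$, $\sinh\varphi$ and $f'(\tau)/f(\tau)$ factors correctly through \rf{wegotit}, \rf{hess} and \rf{wegotit2}, and then verifying that, after using \rf{RF}, the surviving terms collapse precisely into the advertised square plus the two explicit remainder terms rather than into a weaker bound.
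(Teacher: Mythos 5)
Your proposal reproduces the paper's own derivation step for step: the two computations of $\Delta\big(f(\tau)\cosh\varphi\big)$ (product rule versus the Codazzi identity \rf{ARS1995}), the elimination of $\overline{{\rm Ric}}(K^T,N)$ via O'Neill's formulas to isolate $\Delta\cosh\varphi$ in \rf{wegotit}, the use of $|{\rm Hess}(\tau)|^2\geq 0$ to absorb the shape-operator terms, and the final application of the TCC through \rf{RF} followed by completing the square. This is essentially the same approach as the paper and is correct.
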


\section{Parametric type results}\label{pr}

Our aim is to use Lemma \ref{tocho} in order to, under appropriate
assumptions, deduce that the hyperbolic angle $\varphi$ vanishes
identically (and so $M$ is a spacelike slice) or give non-existence
results. In this sense,  we will need to ask the hypersurface to be
parabolic. As explained in Section \ref{parabolicity}, this property
follows automatically under certain natural hypothesis on the
ambient space $\overline{M}= I \times_f F$. However, for the sake of
clarity, we have decided to state our results under the assumption
of parabolicity of the hypersurface.

On the other hand, we will use of the following technical result
(see, for instance, \cite[Lemma 3.1]{Ro-Ru1}) which will allow us to
obtain an upper bound for the integral of the squared length of
$\nabla \cosh \varphi$ on a geodesic ball of $M$.
\begin{lema}\label{lemma1}
Let $M$ be an $n(\geq 2)$-dimensional Riemannian manifold and
consider $v \in C^2(M)$ satisfying $v\Delta v\geq 0$. Let $B_R$ be a
geodesic ball of radius $R$ around $p\in M$. For any $r$ such that
$0<r<R$ we have
\[
\int_{B_r}\vert\nabla
v\vert^2\,dV\leq\frac{4\sup_{B_{R}}v^2}{\mu_{r,R}},
\]
where $B_r$
denotes the geodesic ball of radius $r$ around $p\in M$ and
$\frac{1}{\mu_{r,R}}$ is the capacity of the annulus
$B_R\setminus\bar B_r$.
\end{lema}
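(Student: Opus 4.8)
The plan is to run a Caccioppoli (reverse Poincar\'e) argument: I would test the hypothesis $v\Delta v\ge 0$ against the square of a cut-off function, absorb the resulting cross term by Young's inequality, and then recognise the Dirichlet integral of the cut-off as the capacity of the annulus. No earlier result from the paper is needed; the estimate is self-contained.

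First I would fix any Lipschitz function $\phi$ on $M$, compactly supported in the open ball $B_R$, with $0\le\phi\le 1$ and $\phi\equiv 1$ on $\bar B_r$; such $\phi$ is precisely an admissible function for the condenser $(\bar B_r,B_R)$. Multiplying $v\Delta v\ge 0$ by $\phi^2\ge 0$ and integrating gives $0\le\int_M\phi^2 v\,\Delta v\,dV$. Since $\phi$ has compact support, Green's formula yields no boundary term, and writing $\nabla(\phi^2 v)=\phi^2\nabla v+2\phi v\,\nabla\phi$ I get
\[
0\le\int_M\phi^2 v\,\Delta v\,dV=-\int_M\phi^2|\nabla v|^2\,dV-2\int_M\phi\, v\,\langle\nabla\phi,\nabla v\rangle\,dV,
\]
so that $\int_M\phi^2|\nabla v|^2\,dV\le -2\int_M\phi\, v\,\langle\nabla\phi,\nabla v\rangle\,dV$.

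Next I would absorb the cross term. By Cauchy--Schwarz and Young's inequality with the balanced weight $\tfrac12$ (the choice that produces the sharp constant $4$),
\[
2\,\phi\,|v|\,|\nabla\phi|\,|\nabla v|\le\tfrac12\,\phi^2|\nabla v|^2+2\,v^2|\nabla\phi|^2,
\]
whence $\tfrac12\int_M\phi^2|\nabla v|^2\,dV\le 2\int_M v^2|\nabla\phi|^2\,dV$, i.e. $\int_M\phi^2|\nabla v|^2\,dV\le 4\int_M v^2|\nabla\phi|^2\,dV$. Since $\phi\equiv 1$ on $B_r$ and $\nabla\phi$ is supported inside $B_R$, I bound the left-hand side below by $\int_{B_r}|\nabla v|^2\,dV$ and the right-hand side above by $4\,\sup_{B_R}v^2\int_M|\nabla\phi|^2\,dV$, obtaining
\[
\int_{B_r}|\nabla v|^2\,dV\le 4\,\sup_{B_R}v^2\int_M|\nabla\phi|^2\,dV.
\]

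Finally, the left-hand side does not depend on the cut-off, so I would take the infimum over all admissible $\phi$ on the right. That infimum is by definition the capacity of the annulus $B_R\setminus\bar B_r$, namely $1/\mu_{r,R}$, giving exactly $\int_{B_r}|\nabla v|^2\,dV\le 4\sup_{B_R}v^2/\mu_{r,R}$. The step I would be most careful about is the double bookkeeping at the ends: the integration by parts is licit because $v\in C^2$ and $\phi$ is compactly supported, and one runs the argument over the smooth (or Lipschitz) admissible class for which the condenser capacity is defined as an infimum; the sharpness of the constant $4$ rests solely on the optimal weight $\tfrac12$ in Young's inequality. If $\sup_{B_R}v^2=\infty$ the asserted bound is vacuous, so one may assume it finite.
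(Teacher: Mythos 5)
Your proof is correct: the Caccioppoli-type argument (test $v\Delta v\ge 0$ against $\phi^2$, integrate by parts, absorb the cross term via Young's inequality with weight $\tfrac12$ to get the constant $4$, then take the infimum over admissible cut-offs to recognize the condenser capacity $1/\mu_{r,R}$) establishes exactly the stated bound, and your care about the compact support of $\phi$, the admissible class defining the capacity, and the vacuous case $\sup_{B_R}v^2=\infty$ covers the only delicate points. The paper itself gives no proof of Lemma \ref{lemma1}, deferring to \cite[Lemma 3.1]{Ro-Ru1}; your argument is precisely the standard proof of that cited result, so you have in effect supplied, correctly and self-containedly, the proof the paper outsources.
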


Let $\psi: M \rightarrow \overline{M}$ be an $n$-dimensional CMC
spacelike immersed in a GRW-spacetime $\overline{M}= I \times_f F$
satisfying TCC, and let us assume that the constant mean curvature
of $M$  verifies that
\begin{equation}\label{inequality}
H^2\leq \frac{f'(\tau)^2}{f(\tau)^2}.
\end{equation}
Under these assumptions, from (\ref{fundamental}) and provided that
$n\leq 4$ we get that
\[
\cosh \varphi\,\, \Delta \cosh \varphi\geq 0.
\]
Then, as a consequence of Lemma \ref{lemma1} we have the following
local estimation for the integral of the squared length of $\nabla
\cosh \varphi$ on an arbitrary geodesic ball of $M$.

\begin{teor}
Let $\overline{M}= I \times_f F$ be a Lorentzian warped product with
dimension $n+1\leq 5$ satisfying the TCC. Let $\psi: M \rightarrow
\overline{M}$ be a CMC spacelike hypersurface whose mean curvature
is such that
\[
H^2\leq \frac{f'(\tau)^2}{f(\tau)^2}.
\]
If $B_R$ is a geodesic ball of radius $r$ around $p\in M$, for any
$r$ such that $0<r<R$, then the function $\cosh{\varphi}$ satisfies
\[
\int_{D_r}\vert\nabla \cosh{\varphi}\vert^2\,dV\leq
\frac{C}{\mu_{r,R}},
\]
where $B_r$ is the geodesic ball of radius $r$ around $p\in M$,
$\frac{1}{\mu_{r,R}}$ is the capacity of the annulus
$A_{r,R}:=B_R\setminus\bar{B}_r$ and $C=C(p,R)>0$ is a constant.
\end{teor}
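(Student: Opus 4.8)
The plan is to apply Lemma~\ref{lemma1} to the function $v=\cosh\varphi$, so that the whole theorem reduces to checking the single differential inequality $v\,\Delta v\geq 0$ demanded by that lemma; once this is in place, the asserted estimate is immediate. Since $\psi$ is a smooth spacelike immersion, the hyperbolic angle $\varphi$ is smooth and $\cosh\varphi=\overline{g}(N,\partial_t)$ is a smooth function on $M$, so the requirement $v\in C^2(M)$ holds automatically.

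First I would start from the fundamental inequality of Lemma~\ref{tocho},
\[
\cosh \varphi\,\, \Delta \cosh \varphi \geq \sinh^2\varphi\Big(\frac{n}{2}H+\frac{f'(\tau)}{f(\tau)}\cosh\varphi\Big)^2 +n\sinh^2\varphi\Big(\frac{f'(\tau)^2}{f(\tau)^2}-\frac{n}{4}H^2\Big)+\frac{f'(\tau)^2}{f(\tau)^2}\sinh^4\varphi,
\]
and argue that, under the stated hypotheses, each of the three summands on the right is nonnegative. The first is the product of $\sinh^2\varphi\geq 0$ with a perfect square, and the third is manifestly nonnegative; the only term whose sign is not automatic is the middle one, whose sign is governed entirely by the factor $\frac{f'(\tau)^2}{f(\tau)^2}-\frac{n}{4}H^2$.

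The key step --- and the precise place where the dimensional restriction $n+1\leq 5$ enters --- is the sign analysis of this middle factor. Combining the hypothesis $H^2\leq \frac{f'(\tau)^2}{f(\tau)^2}$ with $n\leq 4$ gives
\[
\frac{n}{4}H^2\leq \frac{n}{4}\,\frac{f'(\tau)^2}{f(\tau)^2}\leq \frac{f'(\tau)^2}{f(\tau)^2},
\]
so that $\frac{f'(\tau)^2}{f(\tau)^2}-\frac{n}{4}H^2\geq 0$ pointwise on $M$. Hence all three summands are nonnegative and we conclude $\cosh\varphi\,\Delta\cosh\varphi\geq 0$, that is $v\,\Delta v\geq 0$. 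I do not expect a genuine obstacle here: Lemma~\ref{tocho} already carries all of the differential-geometric content, and the sole subtlety is this elementary inequality, whose validity is exactly what forces $n\leq 4$.

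Finally, with $v\,\Delta v\geq 0$ established I would invoke Lemma~\ref{lemma1} verbatim: for every $0<r<R$ it yields
\[
\int_{B_r}\vert\nabla\cosh\varphi\vert^2\,dV\leq \frac{4\sup_{B_R}\cosh^2\varphi}{\mu_{r,R}}.
\]
Setting $C=C(p,R):=4\sup_{B_R}\cosh^2\varphi$ completes the argument. This constant is strictly positive and finite, since $\cosh\varphi\geq 1$ is continuous and the supremum is taken over the bounded ball $B_R$ (its closure being compact when $M$ is complete, so that the bound is attained), which gives $C>0$ as claimed.
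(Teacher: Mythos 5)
Your proposal is correct and follows essentially the same route as the paper: the authors likewise observe that under $H^2\leq f'(\tau)^2/f(\tau)^2$ and $n\leq 4$ the right-hand side of the inequality in Lemma~\ref{tocho} is nonnegative, so that $\cosh\varphi\,\Delta\cosh\varphi\geq 0$, and then apply Lemma~\ref{lemma1} to $v=\cosh\varphi$ to obtain the estimate with $C=4\sup_{B_R}\cosh^2\varphi$. Your explicit sign analysis of the middle term, pinpointing where $n\leq 4$ enters, matches the paper's (tacit) reasoning exactly.
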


Let us also assume that the hyperbolic angle $\varphi$ of $M$ is
bounded. Then, there exists a positive constant $C$ such that
$\cosh^2\varphi\leq C$ on $M$. Thus, if we apply Lemma \ref{lemma1}
to the function $v=\cosh\varphi$, we have for  a geodesic ball $B_R$
of radius $R$ around $p\in M$, that for any $r$ such that $0<r<R$
the function $\cosh{\varphi}$ satisfies
\[
\int_{B_r}\vert\nabla \cosh{\varphi}\vert^2\,dV\leq
\frac{4C}{\mu_{r,R}}.
\]

Now, if we also assume that $M$ is parabolic we have that
\[
\lim_{R\rightarrow \infty} \frac{1}{\mu_{r,R}}=0,
\]
that is, $\vert\nabla \cosh{\varphi}\vert^2$ vanishes identically on
$M$ and so $\cosh{\varphi}$ is constant on $M$. Observe that, if
$H\neq 0$, from (\ref{inequality}) we have that $f'(\tau)\neq 0$.
Finally, from (\ref{fundamental}) we conclude that $\varphi=0$ and
so $M$ is a (piece of) spacelike slice.

Summing up, we can state the following result:

\begin{teor} \label{t4}
Let $\overline{M}= I \times_f F$ be a Lorentzian warped product with
dimension $n+1\leq 5$ satisfying the TCC. Let $\psi: M \rightarrow
\overline{M}$ be a complete parabolic CMC spacelike hypersurface
with non zero mean curvature $H$ such that
\[
H^2\leq \frac{f'(\tau)^2}{f(\tau)^2}
\]
and whose hyperbolic angle is bounded. Then $M$ must be a spacelike
slice.
\end{teor}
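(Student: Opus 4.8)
The plan is to show that $\cosh\varphi$ is a bounded subharmonic function on $M$ and then to invoke parabolicity to conclude it is constant, after which Lemma \ref{tocho} will force $\varphi\equiv 0$. The first and crucial point is to extract the correct sign from the fundamental inequality (\ref{fundamental}). Here I would use the dimensional restriction $n+1\leq 5$, i.e. $n\leq 4$, so that $n/4\leq 1$; combined with the hypothesis (\ref{inequality}) this gives $\frac{n}{4}H^2\leq H^2\leq \frac{f'(\tau)^2}{f(\tau)^2}$, whence the middle term $n\sinh^2\varphi\big(\frac{f'(\tau)^2}{f(\tau)^2}-\frac{n}{4}H^2\big)$ of (\ref{fundamental}) is nonnegative. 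Since the other two terms are manifestly nonnegative (the first is a perfect square, the third a nonnegative multiple of $\sinh^4\varphi$), Lemma \ref{tocho} yields $\cosh\varphi\,\Delta\cosh\varphi\geq 0$; because $\cosh\varphi\geq 1$, this means $\cosh\varphi$ is subharmonic on $M$.

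Next I would combine the boundedness hypothesis with parabolicity. The bounded hyperbolic angle provides a constant $C>0$ with $\cosh^2\varphi\leq C$ on $M$, so applying Lemma \ref{lemma1} to $v=\cosh\varphi$ over a geodesic ball $B_R$ around a fixed $p\in M$ yields $\int_{B_r}|\nabla\cosh\varphi|^2\,dV\leq 4C/\mu_{r,R}$ for every $0<r<R$. Letting $R\to\infty$ and using that parabolicity of $M$ forces $\lim_{R\to\infty}1/\mu_{r,R}=0$, I obtain $\int_{B_r}|\nabla\cosh\varphi|^2\,dV=0$ for every $r$. Hence $|\nabla\cosh\varphi|\equiv 0$ and $\cosh\varphi$ is constant on $M$.

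Finally I would feed this back into (\ref{fundamental}). Constancy gives $\Delta\cosh\varphi=0$, so the nonnegative right-hand side vanishes term by term; in particular $\frac{f'(\tau)^2}{f(\tau)^2}\sinh^4\varphi=0$. This is exactly where $H\neq 0$ enters: by (\ref{inequality}) it forces $\frac{f'(\tau)^2}{f(\tau)^2}>0$, hence $f'(\tau)\neq 0$ at every point. Therefore $\sinh^4\varphi=0$, i.e. $\varphi\equiv 0$, and by the characterization recorded in Section \ref{s2} the hypersurface $M$ is (a piece of) a spacelike slice.

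The step I expect to be the main obstacle is the sign assertion of the first paragraph. Without the bound $n\leq 4$ the middle term of (\ref{fundamental}) could be negative and $\cosh\varphi$ would fail to be subharmonic, collapsing the entire parabolicity scheme; it is precisely the interplay of the dimensional restriction and the curvature hypothesis (\ref{inequality}) that secures $\cosh\varphi\,\Delta\cosh\varphi\geq 0$. Once this is in hand, the remaining steps are a routine application of the capacity estimate of Lemma \ref{lemma1} together with the defining property of parabolic manifolds.
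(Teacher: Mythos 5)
Your proposal is correct and follows essentially the same route as the paper: the dimensional bound $n\leq 4$ together with $H^2\leq f'(\tau)^2/f(\tau)^2$ makes the right-hand side of (\ref{fundamental}) nonnegative, the capacity estimate of Lemma \ref{lemma1} plus parabolicity forces $\cosh\varphi$ to be constant, and then $H\neq 0$ gives $f'(\tau)\neq 0$ so that the term $\frac{f'(\tau)^2}{f(\tau)^2}\sinh^4\varphi$ must vanish, yielding $\varphi\equiv 0$. Your write-up in fact makes explicit two steps the paper leaves implicit (why $n\leq 4$ secures the sign of the middle term, and why the vanishing of the right-hand side term by term forces $\sinh\varphi=0$).
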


Recall that the (constant) mean curvature of the spacelike slice
$\tau=t_{0}$ is given by $H=- f'(t_{0})/f(t_{0})$. Bearing this in
mind, we are able to give the following non existence result as a
direct consequence of Lemma \ref{tocho}
\begin{teor}\label{none}
Let $\overline{M}= I \times_f F$ be a Lorentzian warped product with
dimension $n+1>5$ satisfying the TCC. Then there is no complete
parabolic CMC spacelike hyperfaces with non zero mean curvature $H$
such that
\[
H^2\leq \frac{4}{n}\frac{f'(\tau)^2}{f(\tau)^2}
\]
and whose hyperbolic angle is bounded.
\end{teor}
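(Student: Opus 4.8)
The plan is to argue by contradiction, exactly parallelling the proof of Theorem \ref{t4} but exploiting the larger curvature bound permitted in higher dimensions. Suppose such a hypersurface $M$ exists. The starting point is Lemma \ref{tocho}, which gives
\[
\cosh \varphi\,\, \Delta \cosh \varphi \geq \sinh^2\varphi\Big(\frac{n}{2}H+\frac{f'(\tau)}{f(\tau)}\cosh\varphi\Big)^2 +n\sinh^2\varphi\Big(\frac{f'(\tau)^2}{f(\tau)^2}-\frac{n}{4}H^2\Big) +\frac{f'(\tau)^2}{f(\tau)^2}\sinh^4\varphi.
\]
The first and third terms on the right are manifestly non-negative. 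The key observation is that the hypothesis $H^2\leq \frac{4}{n}\frac{f'(\tau)^2}{f(\tau)^2}$ is precisely what forces $\frac{f'(\tau)^2}{f(\tau)^2}-\frac{n}{4}H^2\geq 0$, so the middle term is non-negative as well. Hence $\cosh\varphi\,\Delta\cosh\varphi\geq 0$ on all of $M$, with no restriction on $n$.

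Next I would invoke the parabolicity machinery. Since the hyperbolic angle is bounded, $\cosh\varphi$ is a bounded function, and I would apply Lemma \ref{lemma1} to $v=\cosh\varphi$ on a geodesic ball $B_R$ around a fixed point $p\in M$ to obtain, for $0<r<R$,
\[
\int_{B_r}\vert\nabla \cosh{\varphi}\vert^2\,dV\leq \frac{4\sup_{M}\cosh^2\varphi}{\mu_{r,R}}.
\]
Because $M$ is parabolic, the capacity $\frac{1}{\mu_{r,R}}$ of the annulus tends to $0$ as $R\to\infty$ (this is the characterization of parabolicity used earlier in the section). Letting $R\to\infty$ forces $\nabla\cosh\varphi\equiv 0$, so $\cosh\varphi$ is constant on $M$.

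Finally I would return to the differential inequality to extract the contradiction. With $\cosh\varphi$ constant, $\Delta\cosh\varphi=0$, so every non-negative term on the right-hand side must vanish identically. In particular the third term $\frac{f'(\tau)^2}{f(\tau)^2}\sinh^4\varphi\equiv 0$. Since $H\neq 0$, the hypothesis gives $f'(\tau)\neq 0$ everywhere (were $f'(\tau)=0$ at a point, then $H^2\leq 0$ there, contradicting $H\neq 0$), so $\frac{f'(\tau)^2}{f(\tau)^2}>0$; hence $\sinh\varphi\equiv 0$ and $M$ is a spacelike slice. But a spacelike slice $\tau=t_0$ has mean curvature $H=-f'(t_0)/f(t_0)$, which satisfies $H^2=\frac{f'(\tau)^2}{f(\tau)^2}$, and this \emph{violates} the strict region imposed by the hypothesis once $n>4$: indeed $\frac{4}{n}\frac{f'(\tau)^2}{f(\tau)^2}<\frac{f'(\tau)^2}{f(\tau)^2}=H^2$ when $n>4$, contradicting $H^2\leq\frac{4}{n}\frac{f'(\tau)^2}{f(\tau)^2}$. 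This rules out the existence of such $M$. The main subtlety to watch is the passage from ``$\cosh\varphi$ constant'' to the slice conclusion and the careful bookkeeping showing that the slice it would have to be is incompatible with the mean-curvature bound precisely in the dimension range $n+1>5$.
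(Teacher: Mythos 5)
Your proof is correct and is essentially the paper's own argument: the paper presents the theorem as a ``direct consequence of Lemma \ref{tocho}'' after recalling that a slice $\tau=t_0$ has $H=-f'(t_0)/f(t_0)$, which is exactly the chain you fill in (the hypothesis makes every term on the right of \rf{fundamental} non-negative with no dimension restriction, parabolicity plus Lemma \ref{lemma1} forces $\cosh\varphi$ constant, the vanishing of the third term together with $f'(\tau)\neq 0$ forces $M$ to be a slice, and then $H^2=\frac{f'(\tau)^2}{f(\tau)^2}\leq\frac{4}{n}H^2$ is impossible for $n>4$). No gaps.
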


\begin{teor} \label{maximales}
Let $\overline{M}= I \times_f F$ be a Lorentzian warped product with
dimension $n+1>2$ satisfying the TCC. Then, every parabolic complete
maximal hypersurface whose hyperbolic angle is bounded and such that
$\sup f(\tau)<\infty$, must be totally geodesic. Moreover, if $M$ is
contained in an open slab, then $M$ must be a spacelike slice
$\tau=t_0$, with $f'(t_0)=0$.
\end{teor}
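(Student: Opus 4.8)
The plan is to feed the maximal case $H=0$ into the divergence identities already assembled in Section \ref{tr}, testing against the positive function
\[
u:=\overline{g}(K,N)=f(\tau)\cosh\varphi,
\]
which satisfies $u\ge f(\tau)>0$. Setting $H=0$ in \rf{ARS1995} and substituting \rf{Ric2} for $\overline{\rm Ric}(K^T,N)$, I would obtain
\[
\Delta u = f(\tau)\cosh\varphi\,\Big({\rm Ric}^F(N_{_F},N_{_F})-(n-1)\sinh^2\varphi\,(\log f)''(\tau)+{\rm trace}(A^2)\Big).
\]
The TCC, in the form \rf{RF}, says precisely that ${\rm Ric}^F(N_{_F},N_{_F})-(n-1)(\log f)''(\tau)\sinh^2\varphi\ge 0$; since also ${\rm trace}(A^2)\ge 0$ and $f(\tau)\cosh\varphi>0$, this gives $\Delta u\ge0$. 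Thus $u$ is positive and subharmonic, so $u\,\Delta u\ge 0$.

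First I would prove that $M$ is totally geodesic. The bounded hyperbolic angle makes $\cosh\varphi$ bounded, and together with $\sup f(\tau)<\infty$ this makes $u$ bounded on $M$. Applying Lemma \ref{lemma1} to $v=u$ on the geodesic balls of $M$ and letting $R\to\infty$, parabolicity forces $1/\mu_{r,R}\to 0$, so $\nabla u\equiv 0$ and $u$ is constant. Then $\Delta u=0$, and as $f(\tau)\cosh\varphi>0$ the two non-negative summands in the displayed bracket must both vanish; in particular ${\rm trace}(A^2)=0$. Since the shape operator $A$ is self-adjoint for the induced Riemannian metric, this yields $A\equiv 0$, i.e.\ $M$ is totally geodesic.

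For the `moreover' part I would argue on the height function $\tau$, invoking parabolicity now through positive \emph{super}harmonic functions. Putting $H=0$ in \rf{lapftau} gives $\Delta f(\tau)=-n\,f'(\tau)^2/f(\tau)+\sinh^2\varphi\,f(\tau)(\log f)''(\tau)$, and under the TCC both summands are $\le 0$, so $f(\tau)$ is a positive superharmonic function and hence, by the very definition of parabolicity, constant. Imposing $\Delta f(\tau)=0$ then annihilates the first (non-positive) term, so $f'(\tau)\equiv 0$, and \rf{laptau} collapses to $\Delta\tau=0$: $\tau$ is harmonic. If $M$ lies in an open slab, say $\tau>t_0$ (the case $\tau<t_0$ being symmetric), then $\tau-t_0$ is a positive harmonic---hence positive superharmonic---function, so parabolicity again forces it constant. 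Therefore $\tau\equiv t_1$ and $M$ is the slice $\{t_1\}\times F$; since its mean curvature $-f'(t_1)/f(t_1)$ vanishes, $f'(t_1)=0$.

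The purely computational step---deriving the displayed formula for $\Delta u$---is routine, merely recombining \rf{ARS1995} and \rf{Ric2}. The point demanding care is the bookkeeping of the TCC: one must line up the coefficient $(n-1)$ coming from \rf{Ric2}/\rf{RF}, rather than the coefficient $n$ appearing in the $\cosh\varphi$-identity \rf{wegotit}, so that the fiber Ricci term is manifestly non-negative with no extra curvature assumption on $F$. I expect the main conceptual obstacle to be the realization that parabolicity must be used twice, with two different test functions---the bounded subharmonic $u=f(\tau)\cosh\varphi$ for total geodesy, and the positive superharmonic $f(\tau)$ and $\tau-t_0$ for the slab statement---and that the open-slab hypothesis is exactly what promotes the harmonic function $\tau$ to a positive one to which parabolicity applies.
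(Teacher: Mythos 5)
Your proposal is correct, and the first half (total geodesy) is essentially the paper's own argument: set $H=0$ in \rf{ARS1995}, use \rf{Ric2} and the TCC to see that $u=f(\tau)\cosh\varphi=\overline{g}(K,N)$ is a bounded positive subharmonic function, conclude it is constant by parabolicity, and read off ${\rm trace}(A^2)=0$ from the vanishing of $\Delta u$. Whether one quotes the capacity estimate of Lemma \ref{lemma1} or the Liouville property for bounded subharmonic functions directly is immaterial.

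For the ``moreover'' part your route is a genuine (if small) variant. The paper first invokes Lemma \ref{tocho} with $H=0$ to get $\cosh\varphi\,\Delta\cosh\varphi\geq 0$, hence $\cosh\varphi$ constant by parabolicity, and then divides the already--constant $f(\tau)\cosh\varphi$ by it to conclude that $f(\tau)$ is constant; only then does \rf{lapftau} yield $f'(\tau)\equiv 0$. You instead observe directly from \rf{lapftau} that under the TCC (which forces $(\log f)''\leq 0$) the function $f(\tau)$ is itself positive and superharmonic, hence constant by the very definition of parabolicity, and the vanishing of the two non-positive summands gives $f'(\tau)\equiv 0$. This bypasses Lemma \ref{tocho} altogether and makes the slab statement logically independent of the first half of the proof, at the cost of not recording the (occasionally useful) byproduct that $\cosh\varphi$ is constant. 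From $f'(\tau)\equiv 0$ onward the two arguments coincide: \rf{laptau} gives $\Delta\tau=0$, and the open-slab hypothesis turns $\tau-t_0$ into a positive harmonic (hence superharmonic) function, which parabolicity forces to be constant. Both proofs are complete and correct.
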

\begin{proof}
Let $\psi: M \rightarrow \overline{M}$ be a complete parabolic
maximal hypersurface whose hyperbolic angle is bounded and such that
$\sup f(\tau)<\infty$. Since $\overline{M}$ satisfies the TCC, we
get from (\ref{Ric2}) that $\overline{{\rm Ric}}(K^T,N)\geq 0$.
Hence, from (\ref{ARS1995}) we deduce that the positive function
$f(\tau)\cosh\varphi$ is subharmonic. Thus, since $M$ is parabolic
and $f(\tau)\cosh\varphi$ is bounded, it must be constant. Then,
using again (\ref{ARS1995}), we obtain that ${\rm trace}(A^2)=0$ and
so $M$ is totally geodesic.

On the other hand, from Lemma \ref{tocho} we know that
$\cosh\varphi$ is constant, and so $f(\tau)$ must be also constant.
Then, from (\ref{lapftau}) it follows that $f'(\tau)$ vanishes
identically on $M$. Now, using (\ref{laptau}) we deduce that $\tau$
is harmonic and, since it is bounded from above or from below
because $M$ is contained in an open slab, we conclude that $\tau$ is
constant and $M$ is a spacelike slice. \hfill{$\Box$}
\end{proof}
%\hfill{$\Box$}

For a Lorentzian product space $\overline{M}=I\times F$ (i.e.
$f=1$), TCC reduces to
\[
{\rm Ric}^F(X,X)\geq 0
\]
for all $X$ tangent to the fiber $F$; that is, $\overline{M}$
satisfies TCC if and only if the fiber $F$ has non-negative Ricci
curvature.

As a consequence of Theorem \ref{maximales} we have

\begin{coro} \label{maximales2}
Let $\overline{M}= I \times F$ be a Lorentzian product with
dimension $n+1>2$ whose fiber has non-negative Ricci curvature.
Then, every complete parabolic maximal hypersurface whose hyperbolic
angle is bounded must be totally geodesic. Moreover, if $M$ is
contained in an open slab, then $M$ must be a spacelike slice
$\tau=t_0$.
\end{coro}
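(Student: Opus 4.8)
The plan is to deduce this corollary as an immediate specialization of Theorem \ref{maximales} to a constant warping function. First I would observe that a Lorentzian product $\overline{M} = I \times F$ is exactly the warped product $I \times_f F$ with $f \equiv 1$, so that $f' \equiv 0$ and $f'' \equiv 0$. As recorded in the paragraph preceding the statement, for such a constant $f$ the Timelike Convergence Condition reduces to ${\rm Ric}^F(X,X) \geq 0$ for every $X$ tangent to $F$, which is precisely the hypothesis that the fiber has non-negative Ricci curvature. Hence $\overline{M}$ satisfies the TCC.

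Next I would verify that the only remaining hypothesis of Theorem \ref{maximales}, namely $\sup f(\tau) < \infty$, holds trivially: since $f \equiv 1$, the restriction $f(\tau)$ equals $1$ identically on $M$, so $\sup f(\tau) = 1 < \infty$. Consequently every complete parabolic maximal hypersurface $M$ with bounded hyperbolic angle satisfies all the assumptions of Theorem \ref{maximales}, and invoking that theorem gives at once that $M$ is totally geodesic, which proves the first assertion.

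For the additional statement, I would assume further that $M$ is contained in an open slab. Theorem \ref{maximales} then produces a spacelike slice $\tau = t_0$ with $f'(t_0) = 0$. Since $f \equiv 1$ already forces $f' \equiv 0$, the condition $f'(t_0) = 0$ holds for every $t_0 \in I$ and imposes nothing further; the conclusion is simply that $M$ is a spacelike slice $\tau = t_0$, as desired.

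Because the entire argument is merely a check that the product hypotheses entail those of Theorem \ref{maximales}, there is no genuine obstacle here. The only two points that matter are the reduction of the TCC to ${\rm Ric}^F \geq 0$ (already justified in the text) and the automatic boundedness of the constant function $f(\tau) = 1$; the analytic core—the subharmonicity of $f(\tau)\cosh\varphi$ coming from (\ref{ARS1995}), the use of parabolicity, and the resulting vanishing of ${\rm trace}(A^2)$—is entirely contained in, and reused verbatim from, the proof of Theorem \ref{maximales}.
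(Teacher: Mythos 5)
Your proposal is correct and follows exactly the route the paper intends: the corollary is stated there as an immediate consequence of Theorem \ref{maximales}, using the observation (made just before the statement) that for a Lorentzian product the TCC reduces to non-negative Ricci curvature of the fiber, together with the trivial bound $\sup f(\tau)=1<\infty$. Nothing further is needed.
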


\begin{rem} Observe that the boundedness assumption on $M$ cannot be remove
in Theorem \ref{maximales} and Corollary \ref{maximales2} as shows
the known classical Calabi-Bernstein's theorem in the
Lorentz-Minkowski space.
\end{rem}

Note that for a Lorentzian warped product satisfying the TCC, if
there exists $t_0\in I$ such that $f'(t_0)=0$ then, since $f''\leq
0$, $t_0$ is a global maximum of $f'$ and $\sup f(\tau)<\infty$.
Taking this into account, we get as a consequence of Theorem
\ref{maximales} the following result:

\begin{coro}
Let $\overline{M}= I \times_f F$ be a Lorentzian warped product with
dimension $n+1>2$ satisfying the TCC. If there exists a maximal
slice in $\overline{M}$, then every complete parabolic maximal
hypersurface whose hyperbolic angle is bounded and which is
contained in an open slab, must be a slice.
\end{coro}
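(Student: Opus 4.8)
The plan is to obtain the statement as a direct corollary of Theorem~\ref{maximales}, whose conclusion (totally geodesic in general, and a slice when $M$ lies in an open slab) is precisely what we want. Every hypothesis of Theorem~\ref{maximales} is already among our assumptions except the requirement $\sup f(\tau)<\infty$; hence the entire task reduces to showing that the existence of a maximal slice forces this bound to hold automatically, for every spacelike hypersurface and regardless of its position in the slab.

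First I would recall that a maximal slice is a slice $\tau=t_0$ with $f'(t_0)=0$. Since the TCC gives $f''\leq 0$ on $I$, the function $f'$ is non-increasing, so $f'\geq 0$ on $I\cap(-\infty,t_0]$ and $f'\leq 0$ on $I\cap[t_0,\infty)$. Therefore $f$ increases up to $t_0$ and decreases afterwards, so $t_0$ is a global maximum of $f$ on the whole interval $I$ and $\sup_I f=f(t_0)<\infty$. In particular, for the given hypersurface $M$ one has $\sup_M f(\tau)\leq f(t_0)<\infty$, independently of where $M$ sits inside the slab.

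With this bound established, all the hypotheses of Theorem~\ref{maximales} are met, so $M$ is totally geodesic; and since $M$ is moreover contained in an open slab, the same theorem yields that $M$ is a spacelike slice $\tau=t_0$ with $f'(t_0)=0$, as desired. There is no real analytic obstacle here, since the substantive work---the parabolicity arguments applied to the bounded subharmonic function $f(\tau)\cosh\varphi$ and to $\cosh\varphi$---is already carried out inside the proof of Theorem~\ref{maximales}. The only point needing a short argument is the global maximality of $f$ at $t_0$, whose mild subtlety is that it relies on the concavity $f''\leq 0$ coming from the TCC rather than on the mere vanishing of $f'$ at $t_0$: without concavity the critical point need not be a global maximum and $\sup f$ could be infinite.
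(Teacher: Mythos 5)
Your proposal is correct and follows exactly the paper's own argument: the existence of a maximal slice $\tau=t_0$ means $f'(t_0)=0$, and the concavity $f''\leq 0$ coming from the TCC makes $t_0$ a global maximum of $f$, so $\sup f(\tau)<\infty$ and Theorem~\ref{maximales} applies. (Your write-up is in fact slightly cleaner than the paper's, which misstates the conclusion as ``$t_0$ is a global maximum of $f'$'' where it clearly means $f$.)
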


Finally, we have the following non-existence result for
hypersurfaces in Lorentzian products.

\begin{teor}\label{otro}
Let $\overline{M}= I \times F$ be a Lorentzian product with
dimension $n+1>2$ whose fiber has non-negative Ricci curvature. Then
there is no complete parabolic CMC spacelike hypersurface in
$\overline{M}$ with bounded hyperbolic angle and constant mean
curvature $H\neq 0$.
\end{teor}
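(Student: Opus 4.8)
The plan is to adapt the argument used in the proof of Theorem \ref{maximales} to the present setting, where now the extra hypothesis $H \neq 0$ will force a contradiction. Since $\overline{M}= I \times F$ is a Lorentzian product we have $f \equiv 1$, so that $f'(\tau)$ and $f''(\tau)$ vanish identically, $K = \partial_t$, and the TCC is equivalent to ${\rm Ric}^F \geq 0$ on the fiber. In particular $f(\tau)\cosh\varphi = \cosh\varphi = \overline{g}(K,N)$.

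First I would show that $\cosh\varphi$ is subharmonic. From (\ref{Ric2}), using $(\log f)''(\tau) = 0$, we obtain $\overline{{\rm Ric}}(K^T, N) = \cosh\varphi\, {\rm Ric}^F(N_{_F}, N_{_F}) \geq 0$ by the TCC. Substituting this together with $f'(\tau) = 0$ into (\ref{ARS1995}) yields
\[
\Delta \cosh\varphi = \cosh\varphi\left({\rm Ric}^F(N_{_F}, N_{_F}) + {\rm trace}(A^2)\right),
\]
and since ${\rm Ric}^F(N_{_F}, N_{_F})$ and ${\rm trace}(A^2)$ are both non-negative while $\cosh\varphi > 0$, the function $\cosh\varphi$ is subharmonic on $M$.

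Next, because the hyperbolic angle of $M$ is bounded, $\cosh\varphi$ is a bounded subharmonic function on the parabolic manifold $M$, hence it must be constant. Then $\Delta\cosh\varphi = 0$, and the displayed identity forces both non-negative summands on its right-hand side to vanish; in particular ${\rm trace}(A^2) = 0$. As $A$ is self-adjoint this gives $A = 0$, so $M$ is totally geodesic and therefore $H = -(1/n)\,{\rm trace}(A) = 0$, contradicting $H \neq 0$. Hence no such hypersurface can exist.

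The computation is routine once (\ref{ARS1995}) and (\ref{Ric2}) are in hand; the only delicate point is the parabolicity step, namely the passage from \emph{bounded subharmonic} to \emph{constant}, which is exactly where the hypothesis that $M$ is parabolic enters and which cannot be dropped, as the classical Calabi--Bernstein examples in the Lorentz--Minkowski space show.
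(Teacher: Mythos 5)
Your proof is correct and follows essentially the same route as the paper: the paper likewise observes that for $f\equiv 1$ the identity (\ref{ARS1995}) reduces to $\Delta\cosh\varphi=\cosh\varphi\,{\rm Ric}^F(N_{_F},N_{_F})+\cosh\varphi\,{\rm trace}(A^2)\geq 0$, invokes parabolicity to conclude that the bounded subharmonic function $\cosh\varphi$ is constant, and derives ${\rm trace}(A^2)=0$, contradicting $H\neq 0$. Your write-up merely makes explicit the intermediate use of (\ref{Ric2}) and the step from ${\rm trace}(A^2)=0$ to $H=0$, which the paper leaves implicit.
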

\begin{proof}
Suppose that there exists such a hypersurface $M$. Since $F$ has
non-negative Ricci, we get from (\ref{ARS1995}) that
\[
\Delta\cosh\varphi=\cosh\varphi\,{\rm
Ric}^F(N_F,N_F)+\cosh\varphi\,{\rm trace}\,(A^2)\geq 0.
\]
Thus, $\cosh\varphi$ is a positive and bounded subharmonic function
on a parabolic Riemannian manifold, and so $\cosh\varphi$ is
constant on $M$. But then ${\rm trace}\,(A^2)=0$ and $M$ is totally
geodesic, which is a contradiction. \hfill{$\Box$}
\end{proof}

The following Lemma is a consequence of the generalized maximum
principle for Riemannian manifolds given by Omori \cite{Om} (see
also Yau's paper \cite{Ya}):
\begin{quote}{\it
Let $M$ be a complete Riemannian manifold whose Ricci curvature is
bounded away from $-\infty$ and let $u:M\longrightarrow \mathbb{R}$
be a smooth function on $M$.
\begin{itemize}
\item[a)] If $u$ is bounded from above on $M$, then for each
$\varepsilon>0$ there exists a point $p_\varepsilon\in M$ such that
\[
|\nabla u(p_\varepsilon)|<\varepsilon, \quad \Delta
u(p_\varepsilon)<\varepsilon, \quad {\rm sup} \ u-\varepsilon <
u(p_\varepsilon)\leq {\rm sup} \ u;
\]
\item[b)] If $u$ is bounded from below on $M$, then for each
$\varepsilon>0$ there exists a point $p_\varepsilon\in M$ such that
\[
|\nabla u(p_\varepsilon)|<\varepsilon, \quad \Delta
u(p_\varepsilon)>-\varepsilon, \quad {\rm inf} \ u\leq
u(p_\varepsilon)< {\rm inf} \ u +\varepsilon.
\]
\end{itemize}
Here $\nabla u$ and $\Delta u$ denote, respectively, the gradient
and the Laplacian of $u$.}
\end{quote}

\begin{lema}\label{le1}
Let $\psi: M \rightarrow \overline{M}$ be an $n$-dimensional ($n\geq
2$) CMC complete spacelike hypersurface in a Lorentzian warped
product $\overline{M}= I \times_f F$ whose warping function
satisfies $(\log f)''\leq 0$. If the Ricci curvature of $M$ is
bounded from below and $M$ is contained between two slices, then
\[
H=-\frac{f'(\tau)}{f(\tau)}.
\]
\end{lema}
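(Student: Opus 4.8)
The plan is to apply the Omori--Yau generalized maximum principle to the function $\tau$, whose relevant Laplacian is given by the key identity \rf{laptau}. Since $M$ is contained between two slices, $\tau$ is bounded both from above and from below on $M$, and by hypothesis the Ricci curvature of $M$ is bounded from below, so both parts a) and b) of the maximum principle are available. The strategy is to extract the value of $-f'(\tau)/f(\tau)$ at two sequences of almost-critical points (one where $\tau$ nearly attains its supremum, one where it nearly attains its infimum) and compare the resulting inequalities to squeeze out the claimed equality $H = -f'(\tau)/f(\tau)$.

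First I would apply part a) to $u=\tau$: for each $\varepsilon>0$ there is a point $p_\varepsilon$ with $|\nabla\tau(p_\varepsilon)|<\varepsilon$, $\Delta\tau(p_\varepsilon)<\varepsilon$, and $\tau(p_\varepsilon)$ close to $\sup\tau$. From \rf{sinh} the smallness of $|\nabla\tau|$ forces $\sinh^2\varphi$, and hence $\cosh\varphi-1$, to be small at $p_\varepsilon$, so $\overline{g}(N,\partial_t)=\cosh\varphi\to 1$ along this sequence. Substituting into \rf{laptau} and using $|\nabla\tau|^2=\sinh^2\varphi\to 0$, the inequality $\Delta\tau<\varepsilon$ becomes, in the limit,
\[
-\,n\,\frac{f'(\tau)}{f(\tau)} - nH \;\leq\; 0,
\]
evaluated along a sequence where $\tau\to\sup\tau$. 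Dividing by $n$ yields $-f'(\tau)/f(\tau)\leq H$ as a limiting statement at the supremum. Applying part b) symmetrically at points where $\tau\to\inf\tau$ and where again $\cosh\varphi\to 1$, the inequality $\Delta\tau>-\varepsilon$ gives in the limit $-f'(\tau)/f(\tau)\geq H$ at the infimum.

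The main obstacle, and the place where the hypothesis $(\log f)''\leq 0$ must be used, is that the two one-sided inequalities are a priori evaluated at different points (near $\sup\tau$ versus near $\inf\tau$), so they cannot be combined directly. The idea is to control the monotonicity of $g(t):=-f'(t)/f(t)$: since $g'=-(\log f)''\geq 0$, the function $g$ is nondecreasing in $t$. Consequently $-f'(\tau)/f(\tau)$ attains its largest limiting value near $\sup\tau$ and its smallest near $\inf\tau$, which lets me chain the two inequalities into a single squeeze forcing $g(\tau)\equiv H$ and hence $\cosh\varphi\equiv 1$, i.e.\ $\varphi\equiv 0$. One then reads off $H=-f'(\tau)/f(\tau)$ on all of $M$. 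The delicate point to handle carefully is the passage to the limit in \rf{laptau}: one must verify that $\cosh\varphi\to 1$ genuinely holds along the Omori--Yau sequences (which follows from $|\nabla\tau(p_\varepsilon)|<\varepsilon$ together with \rf{sinh}), so that no residual $\sinh^2\varphi$ term survives to spoil the comparison.
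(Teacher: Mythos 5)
Your proof is correct and follows essentially the same route as the paper's: apply the Omori--Yau maximum principle to $\tau$ at both its supremum and its infimum via \rf{laptau} (with $\cosh\varphi\to 1$ along the sequences because of \rf{sinh}), and then use that $(\log f)''\leq 0$ makes $-f'/f$ nondecreasing to chain $H\leq -f'(\inf\tau)/f(\inf\tau)\leq -f'(\sup\tau)/f(\sup\tau)\leq H$. The only blemish is your claim that this forces $\cosh\varphi\equiv 1$: that neither follows from $-f'(\tau)/f(\tau)\equiv H$ nor is needed, since the constancy of $-f'/f$ on $[\inf\tau,\sup\tau]$ already yields the stated identity at every point of $M$.
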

\begin{proof}
Since $M$ is contained between two slices, the function $\tau$ is
bounded from above and from below.

As $\tau$ is bounded from below, we know from the generalized
maximum principle that for each positive integer $m$ there exists a
point $p_m\in M$ such that
\[
-\frac{1}{m}>\Delta\tau(p_{m})=-\frac{f'(\tau(p_{m}))^2}{f(\tau(p_{m}))}\{n+\vert\nabla\tau
(p_{m})\vert^2\}-nH\overline{g}(N(p_{m}),\partial_t(p_{m}))
\]
where we have used (\ref{laptau}). Letting $m$ tend to infinity, and
taking into account that $\overline{g}(N(p_{m}),\partial_t(p_{m}))$
tends to 1 when $\vert\nabla\tau(p_{m}) \vert^2$ tends to 0, we get
\[
H\leq -\frac{f'(\inf \tau)}{f(\inf \tau)}.
\]

Analogously, since $\tau$ is bounded from above it follows that
\[
H\geq -\frac{f'(\sup \tau)}{f(\sup \tau)}.
\]

Finally, observe that the assumption $(\log f)''\leq 0$ means that
$\frac{-f'}{f}$ is an increasing function, and so
\[
H\leq -\frac{f'(\inf \tau)}{f(\inf \tau)}\leq -\frac{f'(\sup
\tau)}{f(\sup \tau)}\leq H
\]
which finishes the proof. \hfill{$\Box$}
\end{proof}

It worths pointing out that Lemma \ref{le1} goes a bit further than
\cite[Corollary 5.3]{C-Co-Ru-2} where, under similar assumptions for
the 2-dimensional case, a bound for $H^2$  is provided.

Note also that in Lemma \ref{le1} the hypothesis on $f$ follows
immediately if the ambient space satisfies the Timelike Convergence
Condition. On the other hand, the lower bound for the Ricci
curvature of the hypersurface can be deduced from suitable
assumptions on the sectional curvature of the fiber, as the
following Lemma shows:

\begin{lema}\label{le2}
Let $\psi: M \rightarrow \overline{M}$ be an $n$-dimensional ($n\geq
2$) CMC complete spacelike hypersurface in a Lorentzian warped
product $\overline{M}= I \times_f F$ whose fiber $F$ has
non-negative sectional curvature and its warping function satisfies $(\log f)''\leq 0$. Then the Ricci curvature of $M$ is
bounded from below.
\end{lema}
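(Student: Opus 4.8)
The plan is to estimate the Ricci curvature of $M$ pointwise through the Gauss equation, controlling separately the extrinsic (shape operator) contribution and the intrinsic (ambient curvature) one. Let $A$ be the shape operator, which is self-adjoint with respect to $g_{_M}$, and let $\{E_1,\dots,E_n\}$ be a local $g_{_M}$-orthonormal frame on $M$. For a unit vector $X\in T_pM$, the Gauss equation \rf{GF}--\rf{WF} together with $\overline{g}(N,N)=-1$ gives
\[
{\rm Ric}(X,X)=\overline{{\rm Ric}}(X,X)+\overline{g}(\overline{R}(N,X)X,N)+nH\,g_{_M}(AX,X)+g_{_M}(A^2X,X).
\]
First I would deal with the last two terms. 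Since a priori we have no bound on the eigenvalues of $A$, these cannot be handled individually; instead, diagonalising $A$ and completing the square yields $g_{_M}(A^2X,X)+nH\,g_{_M}(AX,X)\ge -\tfrac{n^2}{4}H^2$ at every point, and because $M$ has constant mean curvature this is a genuine uniform constant. This is precisely the step that neutralises the unboundedness of the second fundamental form.

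Next I would rewrite the two ambient terms. Taking $X=E_1$ and completing $\{E_1,\dots,E_n\}$ with $N$ to a pseudo-orthonormal basis of $T_p\overline{M}$, the definition of $\overline{{\rm Ric}}$ gives $\overline{{\rm Ric}}(X,X)+\overline{g}(\overline{R}(N,X)X,N)=\sum_{j=2}^{n}\overline{K}(E_j\wedge X)$, a sum of sectional curvatures of \emph{spacelike} $2$-planes, all of them tangent to $M$. Thus the whole problem reduces to bounding from below the spacelike sectional curvatures of $\overline{M}$.

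The heart of the matter, and the step I expect to be the main obstacle (note that $f(\tau)$ is not assumed bounded, so the ambient curvature may well be unbounded), is to show that under our hypotheses \emph{every} spacelike sectional curvature of $\overline{M}$ is in fact non-negative. Here I would invoke O'Neill's curvature formulas for the warped product \cite{O'N}. Given a spacelike plane $\Pi$, I would choose an orthonormal basis $\{a,b\}$ of it with $b$ tangent to $F$ and $a=a_F-a_0\,\partial_t$, where $a_F$ is tangent to $F$ and $a_0=\overline{g}(a,\partial_t)$. Expanding $\overline{g}(\overline{R}(a,b)b,a)$ and using that the sectional curvature of a plane tangent to the fiber equals $\big(K^F+f'(\tau)^2\big)/f(\tau)^2$ while $\overline{g}(\overline{R}(\partial_t,b)b,\partial_t)=-f''(\tau)/f(\tau)$, a direct computation gives
\[
\overline{K}(\Pi)=\frac{K^F(\Pi_F)}{f(\tau)^2}\,(1+a_0^2)+\frac{f'(\tau)^2}{f(\tau)^2}-a_0^2\,(\log f)''(\tau),
\]
where $\Pi_F$ denotes the plane spanned by $a_F$ and $b$ in the fiber. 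The assumption that $F$ has non-negative sectional curvature makes the first summand non-negative, the second is non-negative by inspection, and the assumption $(\log f)''\le 0$ makes the third non-negative; hence $\overline{K}(\Pi)\ge 0$. It is worth stressing that the possibly unbounded quantities $f'(\tau)^2/f(\tau)^2$ and $f''(\tau)/f(\tau)$ enter only through terms with a favourable sign, which is exactly why no upper bound on $f$ is required. Combining the three estimates, I obtain ${\rm Ric}(X,X)\ge -\tfrac{n^2}{4}H^2$ for every unit $X\in TM$, so the Ricci curvature of $M$ is bounded from below, as desired.
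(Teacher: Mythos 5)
Your argument is correct and follows essentially the same route as the paper: the Gauss equation plus completing the square in the shape operator gives ${\rm Ric}(X,X)\geq(\text{ambient curvature terms})-\tfrac{n^2}{4}H^2$, and O'Neill's warped-product curvature formulas combined with $K^F\geq 0$ and $(\log f)''\leq 0$ show that the ambient contribution is non-negative. The only (harmless) difference is that you prove the slightly stronger pointwise statement that \emph{every} spacelike sectional curvature of $\overline{M}$ is non-negative, whereas the paper only verifies non-negativity of the traced sum $\sum_k\overline{g}(\overline{R}(Y,U_k)Y,U_k)$ directly from \cite[Proposition 7.42]{O'N}.
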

\begin{proof}
Given $p\in M$, let us take a local orthonormal frame
$\{U_1,...,U_n\}$ around $p$. From the Gauss equation
\[
\langle R(X,Y)V,W\rangle=\langle \overline{R}(X,Y)V,W\rangle+\langle
AY,W\rangle \langle AX,V \rangle - \langle AY,V\rangle \langle
AX,W\rangle, \quad X,Y,V,W\in \mathfrak{X}(M)
\]
where $\overline{R}$ and $R$ denote the curvature tensors of
$\overline{M}$ and $M$ respectively, and $A$ is the shape operator
of $\psi$, we get that the Ricci curvature of $M$, ${\rm Ric}^M$
satisfies
\[
{\rm Ric}^M(Y,Y)\geq \sum_k
\overline{g}(\overline{R}(Y,U_k)Y,U_k)-\frac{n^2}{4}H^2\vert
Y\vert^2, \quad Y\in \mathfrak{X}(M).
\]
Now, from \cite[Proposition 7.42]{O'N} we have
\begin{eqnarray*}
\sum_{k=1}^n \overline{g}(\overline{R}(Y,U_k)Y,U_k)&&=
\sum_{k=1}^ng(R^F(Y^F, U_k^F)Y^F,U_k^F)+ (n-1)\frac{f'^2}{f^2}\vert
Y\vert^2 \\
&& -(n-2)(\log f)''g(Y,\nabla\tau)^2 -(\log
f)''\vert\nabla\tau\vert^2\vert Y\vert^2\geq 0,
\end{eqnarray*}
where $R^F$ denotes the curvature tensor of the fiber $F$ and $Y^F$,
$U_k^F$ are the projections of $Y$, $U_k$ on the fiber $F$.
Therefore
\[
{\rm Ric}^M(Y,Y)\geq -\frac{n^2}{4}H^2\vert Y\vert^2,
\]
which ends the proof.\hfill{$\Box$}
\end{proof}

From Theorem \ref{t4} and Lemmas \ref{le1} and \ref{le2}, we get the
following result:

\begin{teor}\label{between}
Let $\overline{M}= I \times_f F$ be a Lorentzian warped product with
dimension $n+1\leq 5$, whose fiber has non-negative sectional
curvature. Let $M$ be a complete parabolic CMC spacelike
hypersurface  with mean curvature $H\not= 0$ which is contained
between two slices and whose hyperbolic angle is bounded. If the
warping function satisfies $f''(\tau)\leq 0$ on $M$, then $M$ must
be a spacelike slice.
\end{teor}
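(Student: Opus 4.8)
The plan is to combine the three ingredients the paper has prepared—Lemma~\ref{le2} to secure a lower bound on the Ricci curvature of $M$, Lemma~\ref{le1} to pin down the value of $H$, and finally Theorem~\ref{t4} to force $M$ to be a slice. First I would observe that the hypotheses of Theorem~\ref{between} are tailored so that all three tools become simultaneously applicable. The assumption $f''(\tau)\le 0$ on $M$, together with $f>0$, means that
\[
(\log f)''(\tau)=\frac{f(\tau)f''(\tau)-f'(\tau)^2}{f(\tau)^2}\le 0
\]
at every point of $M$, so the hypothesis on the warping function required by both Lemma~\ref{le1} and Lemma~\ref{le2} is met. Since the fiber $F$ has non-negative sectional curvature and $M$ is a complete CMC hypersurface, Lemma~\ref{le2} then yields that the Ricci curvature of $M$ is bounded from below.

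Next I would feed this Ricci bound into Lemma~\ref{le1}. The hypersurface $M$ is assumed to be contained between two slices, so $\tau$ is bounded above and below, and the warping function satisfies $(\log f)''\le 0$; hence all hypotheses of Lemma~\ref{le1} hold and we conclude
\[
H=-\frac{f'(\tau)}{f(\tau)}
\]
on $M$. In particular, squaring this identity gives $H^2=f'(\tau)^2/f(\tau)^2$, so the hypersurface automatically satisfies the curvature inequality
\[
H^2\le \frac{f'(\tau)^2}{f(\tau)^2}
\]
that is required as an input to Theorem~\ref{t4}. This is the crucial step where the three pieces interlock: Lemma~\ref{le1} does not merely bound $H$, it produces the exact pointwise relation that makes the sharp inequality of Theorem~\ref{t4} an equality.

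Finally, I would verify that the remaining hypotheses of Theorem~\ref{t4} are in force and invoke it. The ambient space has dimension $n+1\le 5$; it satisfies the TCC because $f''(\tau)\le 0$ and the fiber has non-negative sectional curvature (whence, a fortiori, non-negative Ricci curvature, so the fiber-curvature condition in the TCC characterization holds); the hypersurface $M$ is complete, parabolic, CMC with $H\ne 0$, has bounded hyperbolic angle, and—by the previous paragraph—satisfies $H^2\le f'(\tau)^2/f(\tau)^2$. Theorem~\ref{t4} then concludes that $M$ is a spacelike slice, which is exactly the assertion.

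The only subtle point, and the step I would treat most carefully, is checking that $f''(\tau)\le 0$ together with non-negative fiber sectional curvature genuinely implies the TCC in the form used earlier, namely $f''\le 0$ and ${\rm Ric}^F(X,X)\ge n(ff''-f'^2)g_{_F}(X,X)$. The first condition is immediate; for the second, one notes that $ff''-f'^2\le -f'^2\le 0$ along $M$ while ${\rm Ric}^F\ge 0$ follows from non-negative sectional curvature of $F$, so the inequality holds trivially. Once this compatibility is confirmed, the proof is a direct chaining of the three cited results and requires no further computation.
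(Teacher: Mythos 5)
Your proposal is correct and follows essentially the same route as the paper: the paper's proof is precisely the observation that the hypotheses make \rf{RF} hold, after which it ``finishes as in Theorem \rl{t4} using also Lemmas \rl{le1} and \rl{le2}''. The one small imprecision is your claim that the ambient spacetime satisfies the TCC: the hypothesis $f''(\tau)\leq 0$ is only imposed on the range of $\tau$, not on all of $I$, so the TCC need not hold globally and Theorem \rl{t4} cannot quite be cited as a black box; this is harmless, though, because the TCC enters the proof of Theorem \rl{t4} only through the pointwise inequality \rf{RF} on $M$, which your hypotheses do guarantee --- exactly the distinction the paper makes by asserting that \rf{RF} holds and rerunning the argument rather than invoking the theorem directly.
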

\begin{proof}
It is enough to observe that, under these assumptions, (\ref{RF})
holds. The proof finishes as in Theorem \ref{t4} using also Lemmas
\ref{le1} and \ref{le2}. \hfill{$\Box$}
\end{proof}

\begin{rem} Note that the boundedness of the sectional curvature of
a manifold does not assure its parabolicity. For instance, the
Euclidean space is not parabolic, although it has zero constant
sectional curvature. Neither, as shows \cite[Counterexample
5.4]{RRS}, the parabolicity of the manifold assures the boundedness
of the curvature.
\end{rem}

\begin{rem} Regarding the hypothesis in Theorem \ref{between} note that, in principle,
there does not exist any relation between the conditions "bounded
hyperbolic angle" and "being contained in a slab" for a spacelike
hypersurface in a Lorentzian warped product. In fact, every non
horizontal spacelike hyperplane in the Lorentz-Minkowski space has
bounded hyperbolic angle, but it is not contained in any slab.
Conversely, as shows \cite[Remark 5.3]{CRR2}, there exist spacelike
hypersurfaces in Lorentzian warped products which are contained in a
slab, but whose hyperbolic angle is not bounded.
\end{rem}

\section{Calabi-Bernstein type Problems} \label{CBr}

Let $(F,g_{_F})$ be a (non-compact) $n$-dimensional Riemannian
manifold and $f : I \longrightarrow \mathbb{R}$ a positive smooth
function. For each $u \in C^{\infty}(F)$ such that $u(F)\subset I$,
we can consider its graph $\Sigma_u=\{(u(p),p) \, : \, p\in F\}$ in
the Lorentzian warped product $(\overline{M}=I\times_f
F,\overline{g})$. The graph inherits from $\overline{M}$ a metric,
represented on $F$ by
$$g_u=-du^2+f(u)^2g_{_F}.$$
This metric is Riemannian (i.e., positive definite) if and only if
$u$ satisfies $|Du|<f(u)$, everywhere on $F$, where $Du$ denotes the
gradient of $u$ in $(F,g_{_F})$ and $| Du|^2=g_{_F}(Du,Du)$. Note
that $\tau(u(p),p)=u(p)$ for any  $p \in F$, and so $\tau$ and $u$
may be naturally identified on $\Sigma_u$. When $\Sigma_u$ is
spacelike, the unitary normal vector field on $\Sigma_u$ satisfying
$\overline{g}( N,\partial_t)>0$ is
\begin{equation}\label{Ng}
N=-\frac{1}{f(u)\sqrt{f(u)^2-\mid D u\mid^2}}\,\left(\,
f(u)^2\partial_t + Du \,\right),
\end{equation}
and the corresponding mean curvature function is
\[
H(u)=-\, \mathrm{div}\,\left(\frac{Du}{nf(u)\sqrt{f(u)^2-\mid
Du\mid^2}}\right) -\frac{f'(u)}{n\sqrt{f(u)^2 -\mid
Du\mid^2}}\left(n\,+\,\frac{\mid Du\mid^2}{f(u)^2}\right).
\]

In what follows, we will apply the previous uniqueness and non
existence results on CMC spacelike hypersurfaces given in Section
\ref{pr} to study the \emph{entire} solutions of the CMC spacelike
hypersurface equation in Lorentzian warped product $\overline{M}= I
\times_f F$ whose fiber is a parabolic Riemannian manifold.
Concretely, we will determine, in several relevant cases, all the
entire solutions of
\[
\mathrm{ div}\,\left(\frac{ D u}{f(u)\sqrt{f(u)^2-\mid
 D u\mid^2}}\right)=-nH-\frac{f'(u)}{\sqrt{f(u)^2-\mid
 D u\mid^2}}\left(n+\frac{\mid  Du\mid^2}{f(u)^2}\right),
\eqno\mathrm{(E.1)}
\]
\[
\hspace{2cm} \hspace*{3mm}\mid Du\mid<\lambda f(u), \quad
0<\lambda<1.\hspace*{73mm}\eqno\mathrm{(E.2)}
\]

Note that the constraint (E.2) can be written as
\begin{equation}\label{const}
\cosh \varphi < \,\frac{1}{\sqrt{1-\lambda^2}},
\end{equation}
where $\varphi$ is the hyperbolic angle of $\Sigma_u$. Therefore,
(E.2) implies that $\Sigma_u$ has bounded hyperbolic angle.
Moreover, this constraint means that the differential equation (E.1)
is, in fact, uniformly elliptic.

Now recall that the induced metric on a closed spacelike
hypersurface in a complete Lorentzian manifold could be non
complete. Actually, there exist entire spacelike graphs in
$\mathbb{L}^{n+1}$ which are non complete \cite{AM}. However, an
entire CMC spacelike graph in $\mathbb{L}^{n+1}$ must be complete
\cite{CY}. Therefore, if we want to derive a non-parametric
uniqueness result from a parametric one, we have to prove previously
the completeness of the induced metric. Indeed, the assumption on
$\lambda$ in (E.2) jointly with the fact that $f(u)$ does not
approach to zero on $F$, will provide the completeness of $g_u$ on
$F$ if $g_{_F}$ is complete, as we prove in the following result.

\begin{lema}\label{complete} Let $\overline{M}=I\times_f F$ be a Lorentzian warped product whose fiber
is a (non-compact) complete Riemannian manifold. Consider a function
$u\in C^{\infty}(F)$, with ${\rm Im}(u)\subseteq I$, such that the
entire graph $\Sigma_u=\{(u(p),p) \, : \, p\in F\}\subset
\overline{M}$ endowed with the metric $g_u=-du^2+f(u)^2g_{_F}$ is
spacelike. If the hyperbolic angle of $\Sigma_u$ is bounded and
$\inf f(u)>0$, then the graph $(\Sigma _u,g_{_{\Sigma_u}}$ is
complete, or equivalently the Riemannian surface $(F,g_u)$ is
complete.
\end{lema}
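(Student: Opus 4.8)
The plan is to show the metric $g_u = -du^2 + f(u)^2 g_{_F}$ on $F$ is complete by comparing it with a rescaled version of the (complete) fiber metric $g_{_F}$. The key observation is that the bounded hyperbolic angle assumption, via \rf{const}, gives $|Du|^2 < \lambda^2 f(u)^2$ for some $\lambda \in (0,1)$, so that $f(u)^2 - |Du|^2 > (1-\lambda^2) f(u)^2 > 0$. I want to exploit this together with $\inf f(u) > 0$ to bound $g_u$ below by a constant multiple of $g_{_F}$.

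The main computation I would carry out is the following pointwise estimate. For any tangent vector $X \in T_pF$, write $du(X) = g_{_F}(Du,X)$, so that by Cauchy--Schwarz $du(X)^2 \leq |Du|^2\, g_{_F}(X,X) < \lambda^2 f(u)^2\, g_{_F}(X,X)$. Hence
\[
g_u(X,X) = -du(X)^2 + f(u)^2 g_{_F}(X,X) \geq f(u)^2(1-\lambda^2)\, g_{_F}(X,X).
\]
Since $\inf f(u) > 0$, there is a constant $c_0 := (1-\lambda^2)(\inf f(u))^2 > 0$ with $g_u(X,X) \geq c_0\, g_{_F}(X,X)$ for all $X$. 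This is the crux: it says the identity map $(F,g_{_F}) \longrightarrow (F,g_u)$ does not decrease lengths by more than the fixed factor $\sqrt{c_0}$, so every $g_u$-divergent curve is $g_{_F}$-divergent as well.

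From this length comparison the completeness follows by a standard argument: given a $g_u$-Cauchy (or divergent) sequence, the length of any curve measured in $g_u$ dominates $\sqrt{c_0}$ times its $g_{_F}$-length, so the $g_u$-distance between two points is at least $\sqrt{c_0}$ times their $g_{_F}$-distance, i.e. $d_{g_u} \geq \sqrt{c_0}\, d_{g_{_F}}$. A metric space whose distance dominates a constant multiple of a complete distance is itself complete; equivalently, a divergent curve for $g_{_F}$ (which exists by completeness of $g_{_F}$, escaping every compact set with infinite $g_{_F}$-length) has infinite $g_u$-length, so $g_u$ is complete by the Hopf--Rinow characterization via divergent curves. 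The equivalence stated at the end is immediate since $(\Sigma_u, g_{_{\Sigma_u}})$ and $(F, g_u)$ are isometric via the graph parametrization $p \mapsto (u(p),p)$, under which $\tau$ and $u$ are identified.

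I do not expect a serious obstacle here; the only point requiring mild care is justifying that the crude lower bound $g_u \geq c_0\, g_{_F}$ genuinely forces completeness. The cleanest route is to invoke that if two Riemannian metrics satisfy $g_u \geq c_0\, g_{_F}$ pointwise with $g_{_F}$ complete, then $g_u$ is complete, because length and hence distance satisfy $\ell_{g_u}(\gamma) \geq \sqrt{c_0}\,\ell_{g_{_F}}(\gamma)$ for every curve $\gamma$, so $g_u$-bounded sets are $g_{_F}$-bounded, hence $g_{_F}$-precompact, hence $g_u$-precompact, and one concludes completeness of $g_u$.
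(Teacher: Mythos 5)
Your proof is correct and follows essentially the same route as the paper: both establish the pointwise bound $g_u\geq c_0\,g_{_F}$ with $c_0=(\inf f(u))^2/\sup(\cosh^2\varphi)>0$ (the paper via Cauchy--Schwarz in the induced metric, yielding the factor $1/\cosh^2\varphi$ directly; you via Cauchy--Schwarz in $g_{_F}$ and the equivalent gradient bound $|Du|\leq\lambda f(u)$), and then conclude completeness by the standard length-comparison argument with the complete metric $g_{_F}$.
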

\begin{proof}
The classical Schwartz inequality gives
\[
g_{_{\Sigma_u}}(\nabla\tau, v)^2\leq
g_{_{\Sigma_u}}(\nabla\tau,\nabla\tau) \, g_{_{\Sigma_u}}(v,v), \ \
{\rm for \ \ all}\ \ v\in T_q(\Sigma_u), \,\, q\in \Sigma_u
\]
and therefore
\[
g_{_{\Sigma_u}}(v,v)\geq -g_{_{\Sigma_u}}(\nabla\tau,\nabla\tau) \,
g_{_{\Sigma_u}}(v,v)+ f(\tau)^2 g_{_F}(d\pi_{_F}(v),d\pi_{_F}(v)),
\]
which implies
\[
g_{_{\Sigma_u}}(v,v)\geq\frac{f(\tau)^2}{\cosh^2\varphi} \,
g_{_F}(d\pi_{_F}(v),d\pi_{_F}(v)),
\]
and $\sup(\cosh\varphi)<\infty$. If we denote by ${\cal L}(\alpha)$
and $\mathcal{L}_u(\alpha)$ the lengths of a smooth curve $\alpha$
on $F$ with respect to the metrics $g_{_F}$ and $g_u$ ,
respectively, it is not difficult to see that
$${\cal L}_u(\alpha)\geq B\inf (f(u)){\cal L}_u(\alpha),$$
\noindent where $B=\frac{1}{\sup(\cosh\varphi)}$. Therefore, since
the Riemannian manifold $(F,g_{_F})$ is complete and $\inf
(f(u))>0$, we conclude that the metric $g_u$ is also complete.
\hfill{$\Box$}
\end{proof}

Next, making use of the study developed in Section \ref{pr}, we
provide several results for bounded solutions to the Equation
(E.1)+(E.2) under suitable assumptions.

\begin{teor} Let $(F,g)$ be a simply connected complete parabolic Riemannian $n$-manifold,
$n\leq 4$, whose sectional curvature is non-negative. Let
$f:I\longrightarrow \mathbb{R}^+$ be a smooth function satisfying
$f''\leq 0$. Then, the only bounded entire solutions to the Equation
(E.1)+(E.2) for $H\in \mathbb{R}^*$, are the constant functions
$u=u_0$ with $H=-\frac{f'(u_0)}{f(u_0)}$.
\end{teor}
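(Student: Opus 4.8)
The plan is to translate the analytic statement into the parametric language of Section~\ref{pr} and then invoke Theorem~\ref{between}. So I would start from a bounded entire solution $u$ of (E.1)+(E.2) with $H\neq 0$ and read it geometrically through its graph $\Sigma_u=\{(u(p),p):p\in F\}$, which is a spacelike hypersurface of $\overline{M}=I\times_f F$ (spacelike because (E.2) gives $|Du|<\lambda f(u)<f(u)$) with constant mean curvature $H$. The first step is to record the three consequences of the hypotheses that feed the machinery. By (\ref{const}), the constraint (E.2) forces $\cosh\varphi<1/\sqrt{1-\lambda^2}$, so $\Sigma_u$ has bounded hyperbolic angle. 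Since $u$ is bounded, with image having compact closure in $I$, the graph $\Sigma_u$ is contained between two slices and, by continuity and positivity of $f$, one has $0<\inf f(u)\le\sup f(u)<\infty$.

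The next task is to verify the two global properties that the parametric results presuppose but which are not automatic for a spacelike graph, namely completeness and parabolicity of the induced metric. Completeness follows at once from Lemma~\ref{complete}, whose hypotheses are exactly that $F$ be complete, $\varphi$ be bounded and $\inf f(u)>0$, all of which we have just secured. Parabolicity follows from the result of \cite{RRS} quoted in Section~\ref{parabolicity}: because $F$ is simply connected, its universal Riemannian covering is $F$ itself, hence parabolic, and we have already checked that $\varphi$ is bounded and that $f(\tau)$ satisfies conditions~i) and~ii). Thus $(\Sigma_u,g_u)$ is a complete parabolic CMC spacelike hypersurface, with bounded hyperbolic angle, contained between two slices.

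At this point all the hypotheses of Theorem~\ref{between} are in place: the ambient dimension is $n+1\le 5$, the fiber $F$ has non-negative sectional curvature, the warping function satisfies $f''\le 0$, the mean curvature is nonzero, and $M=\Sigma_u$ is complete, parabolic and trapped in a slab. Theorem~\ref{between} then forces $\Sigma_u$ to be a spacelike slice $\tau=u_0$, i.e.\ $u\equiv u_0$ is constant; the slice formula for the mean curvature gives $H=-f'(u_0)/f(u_0)$, with $f'(u_0)\neq 0$ consistent with $H\neq 0$. The reverse inclusion is immediate, since each such constant $u_0$ is a solution of (E.1)+(E.2). I would also observe, as in the proof of Theorem~\ref{between}, that non-negative fiber curvature together with $f''\le 0$ already yields the TCC and the inequality (\ref{RF}), so no separate curvature assumption on $\overline{M}$ is required.

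I expect the only genuine obstacle to be the bridging step of the second paragraph. Unlike an entire graph over a Riemannian product, a spacelike graph in a Lorentzian warped product need not carry a complete metric, and without completeness and parabolicity the parametric theorems do not apply; the crux is therefore checking that boundedness of $u$ together with (E.2) supplies precisely the quantitative bounds $\inf f(u)>0$, $\sup f(u)<\infty$ and $\sup\cosh\varphi<\infty$ that Lemma~\ref{complete} and the criterion of \cite{RRS} demand. Once those are in hand, the reduction to Theorem~\ref{between} is purely formal.
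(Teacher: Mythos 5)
Your proposal is correct and follows essentially the same route as the paper: pass to the graph $\Sigma_u$, use (E.2) to bound the hyperbolic angle, use boundedness of $u$ to get the slab containment and the bounds on $f(u)$, then invoke Lemma~\ref{complete} for completeness and Theorem~\ref{between} to conclude. You are in fact more explicit than the paper's two-line proof in verifying parabolicity of $\Sigma_u$ via the quoted result of \cite{RRS}, which is a step the paper leaves implicit but which your argument correctly supplies.
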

\begin{proof}
Let $u$ be an entire solution to (E.1)+(E.2). As we have commented
above, the normal unitary vector field on the graph
$\Sigma_u=\{(u(p),p) \, : \, p\in F\}$ satisfying that
$\cosh\varphi=\overline{g}(N,\partial_t)>0$ is given by (\ref{Ng}),
and hence the constraint (E.2) can be expressed as (\ref{const}).
Finally, making use of Lemma \ref{complete} and Theorem
\ref{between} the proof ends. \hfill{$\Box$}
\end{proof}

As a consequence of Lemma \ref{complete} and Theorem \ref{none}, we
have
\begin{teor}
Let $(F,g)$ be a simply connected complete parabolic Riemannian
$n$-manifold, $n> 4$. Let $f:I\longrightarrow \mathbb{R}^+$ be a
smooth function satisfying $f''\leq 0$, $\inf f>0$, $\sup f<\infty$
and $$\inf{\rm Ric}^F\geq n(ff''-f'^2).$$ If $H\in\mathbb{R}^*$ is
such that
\[
H^2\leq \inf_{I}\left\lbrace \frac{f'(t)^2}{f(t)^2}\right\rbrace,
\]
then the Equation (E.1)+(E.2) has not entire solutions.
\end{teor}

\begin{coro}
Let $(F,g)$ be a simply connected complete parabolic Riemannian
$n$-manifold, $n> 4$. Let $f:I\longrightarrow \mathbb{R}^+$ be a
smooth function satisfying $f''\leq 0$ and $$\inf{\rm Ric}^F\geq
n(ff''-f'^2).$$ If $H\in\mathbb{R}^*$ is such that
\[
H^2\leq \inf_{I}\left\lbrace \frac{f'(t)^2}{f(t)^2}\right\rbrace,
\]
then the Equation (E.1)+(E.2) has not entire bounded solutions.
\end{coro}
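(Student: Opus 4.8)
The plan is to reduce the statement to the previous Theorem (the one obtained ``As a consequence of Lemma \ref{complete} and Theorem \ref{none}'') by restricting the base interval. The only difference between the two statements is that the corollary drops the global hypotheses $\inf f>0$ and $\sup f<\infty$, while in exchange it asserts only the non-existence of \emph{bounded} entire solutions. So a boundedness assumption on $u$ should be exactly what is needed to recover those two missing conditions on a suitable subinterval, after which the earlier result applies verbatim.

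First I would argue by contradiction: suppose $u\in C^{\infty}(F)$ is a bounded entire solution of (E.1)+(E.2). Since $u$ is bounded and ${\rm Im}(u)\subseteq I$, the closure $\overline{{\rm Im}(u)}$ is a compact subset of the open interval $I$, so there is room on both sides to choose a bounded open subinterval $I'\subset I$ with $\overline{{\rm Im}(u)}\subset I'$ and $\overline{I'}\subset I$. I would then consider the Lorentzian warped product $\overline{M}'=I'\times_f F$, where $f$ now denotes the restriction $f|_{I'}$, and check that all hypotheses of the previous Theorem hold on $\overline{M}'$. The inequality $f''\leq 0$ and the curvature condition $\inf{\rm Ric}^F\geq n(ff''-f'^2)$ are preserved simply because $I'\subseteq I$, the right-hand sides only becoming easier to dominate on the smaller interval. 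Since $\overline{I'}$ is compact and $f$ is continuous and strictly positive, one obtains $\inf_{I'}f>0$ and $\sup_{I'}f<\infty$, so the two global conditions absent from the corollary's hypotheses are now satisfied on $I'$.

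The point deserving a little care is the bound on $H^2$: restricting to a smaller interval can only increase the infimum, so $\inf_{I'}\{f'^2/f^2\}\geq\inf_{I}\{f'^2/f^2\}\geq H^2$, and the required inequality survives. Completeness of the induced metric is not an extra worry here, since it is handled inside the previous Theorem through Lemma \ref{complete}, whose hypothesis $\inf f(u)>0$ again holds because $u$ is bounded and $f$ is continuous and positive. To conclude, I would observe that $u$ remains an entire solution of (E.1)+(E.2) for $\overline{M}'$: both (E.1) and (E.2) involve only the values of $u$, $f$, $f'$ at points of ${\rm Im}(u)\subseteq I'$, so neither the equation nor the constraint changes. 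Hence the previous Theorem applies to $\overline{M}'$ and forbids the existence of such a solution, contradicting the existence of $u$. The whole content of the argument is thus the localization afforded by the boundedness of $u$; no new analytic input beyond the previous Theorem is required, and the main (and only) obstacle is verifying that the infimum defining the bound on $H^2$ moves in the favorable direction under restriction.
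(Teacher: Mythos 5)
Your overall strategy is exactly the one the paper has in mind: the corollary is stated with no proof of its own, as an immediate consequence of the preceding theorem, and the intended mechanism is precisely the localization you describe, with the boundedness of $u$ standing in for the dropped hypotheses $\inf f>0$ and $\sup f<\infty$. All the routine verifications you list (persistence of $f''\le 0$ and of the Ricci bound under restriction, monotonicity of the infimum bounding $H^2$, invariance of (E.1)+(E.2) under shrinking the base interval) are correct.

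There is, however, one step you assert that is false in general and that carries the whole weight of the reduction: from ``$u$ is bounded and ${\rm Im}(u)\subseteq I$'' you conclude that $\overline{{\rm Im}(u)}$ is a compact subset of the \emph{open} interval $I$. Boundedness gives compactness of the closure in $\mathbb{R}$, not its containment in $I$: the values of $u$ may accumulate at a finite endpoint of $I$, and the endpoint behaviour of $f$ is exactly what you are trying to control. Take $I=(0,b)$ and $f(t)=t$; then $f''\le 0$ and $\inf_I f'(t)^2/f(t)^2=1/b^2>0$, so the hypotheses of the corollary can all be met, yet a bounded solution $u$ with $\inf u=0$ would have $\inf f(u)=0$, and no subinterval $I'$ containing ${\rm Im}(u)$ satisfies $\inf_{I'}f>0$. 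In that situation both Lemma \ref{complete} (completeness of $g_u$) and the parabolicity criterion of Section 3 --- hence Theorem \ref{none} --- become unavailable. To close the gap you must either read ``bounded'' as ``$\Sigma_u$ is contained in a slab $[t_1,t_2]\times F$ with $t_1,t_2\in I$'' (equivalently $\overline{{\rm Im}(u)}\subset I$), which is presumably what the authors intend and which you should state explicitly, or add an argument excluding accumulation of $u$ at an endpoint where $f$ degenerates. Note that if $f$ has a strictly positive limit at the relevant finite endpoint, concavity of $f$ already yields $\inf f(u)>0$, so the only genuinely problematic case is $f\to 0$ at a finite endpoint approached by the values of $u$.
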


When $f=1$, we have
\begin{teor}
Let $(F,g)$  be a simply connected complete parabolic Riemannian
$n$-manifold, $n\geq 2$, whose Ricci curvature is non negative. Then
there is no entire solutions to the equation
\[
\mathrm{ div}\,\left(\frac{ D u}{\sqrt{1-\mid
 D u\mid^2}}\right)=-nH
\]
\[
\mid Du\mid<\lambda, \quad 0<\lambda<1,
\]
where $H\not=0$.
\end{teor}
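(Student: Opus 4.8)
The plan is to reduce this non-parametric statement to the parametric non-existence result of Theorem \ref{otro} for Lorentzian products, in the particularly clean situation $f\equiv 1$ where the warping-function hypotheses become automatic.

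First I would read an entire solution $u$ geometrically. Its graph $\Sigma_u=\{(u(p),p):p\in F\}$ is an entire spacelike hypersurface in the Lorentzian product $\overline{M}=I\times F$, and by the mean curvature formula recorded above it is a CMC hypersurface with $H\neq 0$. The constraint $|Du|<\lambda$ is exactly (E.2) with $f\equiv 1$, so by \rf{const} it becomes $\cosh\varphi<1/\sqrt{1-\lambda^2}$; hence $\Sigma_u$ has bounded hyperbolic angle.

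Next I would verify the completeness and parabolicity of $\Sigma_u$, the two properties that Theorem \ref{otro} requires of its hypersurface. Completeness follows from Lemma \ref{complete}: since $f\equiv 1$ we trivially have $\inf f(u)=1>0$, and $(F,g)$ is complete by hypothesis, so the induced metric $g_u$ is complete. For parabolicity I would invoke the quoted result of \cite{RRS}: with $f\equiv 1$ the conditions $\sup f(\tau)<\infty$ and $\inf f(\tau)>0$ hold trivially, the hyperbolic angle is bounded by the previous step, and since $F$ is simply connected and parabolic its universal Riemannian covering (namely $F$ itself) is parabolic; therefore $\Sigma_u$ is parabolic.

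Finally, note that for $f\equiv 1$ the TCC degenerates to $\mathrm{Ric}^F\geq 0$, which is assumed. Thus $\Sigma_u$ is a complete parabolic CMC spacelike hypersurface with bounded hyperbolic angle and $H\neq 0$ inside a Lorentzian product whose fiber has non-negative Ricci curvature, precisely the object whose existence Theorem \ref{otro} forbids. This contradiction proves the claim. I expect the only delicate point to be the parabolicity transfer in the third step: the real content is already packaged inside Theorem \ref{otro} and the \cite{RRS} criterion, and what this proof must do carefully is check that simple connectedness of $F$ makes the universal-covering hypothesis of that criterion immediate, so that no additional covering-space argument is needed.
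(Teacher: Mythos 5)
Your proposal is correct and follows the same route as the paper, whose proof is the one-line ``It follows from Lemma \ref{complete} and Theorem \ref{otro}.'' You have simply made explicit the steps the paper leaves implicit --- in particular the parabolicity transfer to $\Sigma_u$ via the quoted criterion of \cite{RRS}, which is indeed the point where simple connectedness of $F$ is used.
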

\begin{proof} It follows from Lemma \ref{complete} and Theorem
\ref{otro}. \hfill{$\Box$}
\end{proof}

We finish with some results under the assumption $H=0$, the second
one for $f=1$ constant.
\begin{teor}
Let $(F,g)$ a simply connected complete parabolic Riemannian
$n$-manifold, $n\geq 2$. Let $f:I\longrightarrow \mathbb{R}^+$ be a
smooth function satisfying $f''\leq 0$, $\inf f>0$ and $\sup
f<\infty$. Then the only entire solutions to the equation
\[
\mathrm{ div}\,\left(\frac{ D u}{f(u)\sqrt{f(u)^2-\mid
 D u\mid^2}}\right)=-\frac{f'(u)}{\sqrt{f(u)^2-\mid
 D u\mid^2}}\left(n+\frac{\mid  Du\mid^2}{f(u)^2}\right)
\]
\[
\mid Du\mid<\lambda f(u), \quad 0<\lambda<1,
\]
which are bounded from above or from below are the
constants $u=u_0$, with $f'(u_0)=0$.
\end{teor}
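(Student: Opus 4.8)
The plan is to translate the non-parametric problem into the parametric setting of Section \ref{pr} and then carry out the maximal-case argument directly. An entire solution $u$ to the given equation is precisely (E.1) with $H=0$, so its graph $\Sigma_u=\{(u(p),p):p\in F\}$ is a maximal ($H=0$) spacelike hypersurface in $\overline{M}=I\times_f F$, with $\tau$ identified with $u$. The constraint $|Du|<\lambda f(u)$, $0<\lambda<1$, is equivalent to (\ref{const}), so $\Sigma_u$ has bounded hyperbolic angle, with $\cosh\varphi\le 1/\sqrt{1-\lambda^2}$.

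First I would secure the two global properties needed to pass from an \emph{entire} graph to a \emph{complete parabolic} hypersurface. Since $(F,g_{_F})$ is complete, the hyperbolic angle is bounded and $\inf f(u)>0$, Lemma \ref{complete} guarantees that the induced metric $g_u$ is complete, i.e.\ $\Sigma_u$ is a complete spacelike hypersurface; this is the step that overcomes the well-known non-completeness of general entire spacelike graphs. Next, because $F$ is simply connected, complete and parabolic, its universal Riemannian covering is $F$ itself and hence parabolic; combined with bounded hyperbolic angle, $\sup f(\tau)<\infty$ and $\inf f(\tau)>0$, the transfer result of \cite{RRS} quoted in Section \ref{parabolicity} yields that $\Sigma_u$ is parabolic.

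With completeness and parabolicity in hand, I would run the maximal computation directly. From $f''\le 0$ and $f>0$ one has $(\log f)''=(ff''-f'^2)/f^2\le 0$, so setting $H=0$ in (\ref{lapftau}) gives
\[
\Delta f(\tau)=-n\,\frac{f'(\tau)^2}{f(\tau)}+|\nabla\tau|^2 f(\tau)(\log f)''(\tau)\le 0 .
\]
Thus $f(\tau)$ is a positive superharmonic function on the parabolic manifold $\Sigma_u$, hence constant. Then $\Delta f(\tau)=0$ forces both nonpositive summands to vanish, so $f'(\tau)\equiv 0$ on $\Sigma_u$. Feeding $H=0$ and $f'(\tau)\equiv 0$ back into (\ref{laptau}) gives $\Delta\tau=0$, i.e.\ $\tau=u$ is harmonic; being bounded from above or from below on a parabolic manifold, it must be constant, say $u\equiv u_0$, with $f'(u_0)=0$. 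Conversely, each such constant trivially solves (E.1)+(E.2).

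The main obstacle is conceptual rather than computational: Theorem \ref{maximales} is stated under the TCC, which is \emph{not} assumed here, so that result cannot be quoted verbatim. The key observation is that in the maximal case the single hypothesis $f''\le 0$ already makes $f(\tau)$ superharmonic through (\ref{lapftau}), so no curvature condition on $F$ and no use of Lemma \ref{tocho} (which does require the TCC) are needed. Securing completeness via Lemma \ref{complete} and transferring parabolicity from $F$ to $\Sigma_u$ are the genuinely necessary global inputs; once they are available, the conclusion follows directly from the definition of parabolicity applied first to $f(\tau)$ and then to $\tau$.
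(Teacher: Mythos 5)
Your proof is correct, and it deliberately departs from the paper's own argument, which consists of the single line ``It follows from Lemma \ref{complete} and Theorem \ref{maximales}.'' The preliminary reductions coincide: like the paper, you identify an entire solution with a maximal graph $\Sigma_u$, read the gradient bound (E.2) as boundedness of the hyperbolic angle via (\ref{const}), invoke Lemma \ref{complete} (using $\inf f>0$) to get completeness, and obtain parabolicity of $\Sigma_u$ from the result of \cite{RRS} quoted in Section \ref{parabolicity}. Where you diverge is the core step. The paper quotes Theorem \ref{maximales}, whose proof runs through (\ref{Ric2}), (\ref{ARS1995}) and Lemma \ref{tocho} and genuinely uses the full TCC --- in particular the fiber condition ${\rm Ric}^F(X,X)\geq n(ff''-f'^2)g_{_F}(X,X)$ --- which is \emph{not} among the hypotheses of the present theorem (only $f''\le 0$ is assumed, and parabolicity of $F$ gives no lower bound on ${\rm Ric}^F$). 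You correctly spotted this and replaced that step by the observation that, for $H=0$, formula (\ref{lapftau}) makes $f(\tau)$ a positive superharmonic function as soon as $(\log f)''\le 0$, which follows from $f''\le 0$ alone; parabolicity then forces $f(\tau)$ to be constant, hence $f'(\tau)\equiv 0$, hence $\tau$ harmonic by (\ref{laptau}), hence constant by one-sided boundedness, with $f'(u_0)=0$. Your route is therefore not only a valid alternative but actually proves the theorem under exactly its stated hypotheses, repairing the mismatch in the paper's citation; the only thing you give up is the intermediate ``totally geodesic'' conclusion of Theorem \ref{maximales}, which you recover for free once $u\equiv u_0$ with $f'(u_0)=0$.
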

\begin{proof} It follows from Lemma \ref{complete} and Theorem
\ref{maximales}. \hfill{$\Box$}
\end{proof}

\begin{coro}
Let $(F,g)$ a simply connected complete parabolic Riemannian
$n$-manifold, $n\geq 2$. Let $f:I\longrightarrow \mathbb{R}^+$ be a
smooth function satisfying $f''\leq 0$. Then the only bounded entire
solutions to the equation
\[
\mathrm{ div}\,\left(\frac{ D u}{f(u)\sqrt{f(u)^2-\mid
 D u\mid^2}}\right)=-\frac{f'(u)}{\sqrt{f(u)^2-\mid
 D u\mid^2}}\left(n+\frac{\mid  Du\mid^2}{f(u)^2}\right)
\]
\[
\mid Du\mid<\lambda f(u), \quad 0<\lambda<1,
\]
are the constants $u=u_0$, with $f'(u_0)=0$.
\end{coro}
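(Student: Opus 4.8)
The plan is to follow the same route as the analogous $\inf f>0$, $\sup f<\infty$ statement that precedes this Corollary, namely through Lemma \ref{complete} and Theorem \ref{maximales}; the only new point is that boundedness of the solution $u$ recovers, along the graph, the control on the warping function that the preceding statement imposed on all of $I$. First I would record that a bounded entire solution $u$ of the displayed equation with $|Du|<\lambda f(u)$, $0<\lambda<1$, has spacelike graph $\Sigma_u$: the constraint forces $|Du|<f(u)$, so $g_u=-du^2+f(u)^2g$ is Riemannian. Since the displayed PDE is exactly $H(u)=0$ for the mean-curvature expression written in Section \ref{CBr}, $\Sigma_u$ is a maximal hypersurface, and by \rf{const} the constraint reads $\cosh\varphi<1/\sqrt{1-\lambda^2}$, so the hyperbolic angle of $\Sigma_u$ is bounded.

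The decisive observation is then that boundedness of $u$ yields the bounds on $f$. Its image lies in a compact subinterval $[a,b]\subset I$, on which the continuous positive function $f$ attains a positive minimum and a finite maximum; hence $\inf_F f(u)>0$ and $\sup_F f(u)<\infty$ hold along $\Sigma_u$, even though neither is assumed on all of $I$. From $\inf f(u)>0$ together with the bounded angle, Lemma \ref{complete} gives that $(F,g_u)$, i.e. the induced metric on the graph, is complete. Since $F$ is simply connected, complete and parabolic, its universal Riemannian covering coincides with $F$ and is parabolic, so the result of \cite{RRS} recalled in Section \ref{parabolicity} applies with $\sup f(\tau)<\infty$, $\inf f(\tau)>0$ and bounded angle, and shows that $\Sigma_u$ is parabolic.

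At this point $\Sigma_u$ is a complete parabolic maximal spacelike hypersurface with bounded hyperbolic angle and $\sup f(\tau)<\infty$ in a warped product of dimension $n+1>2$ satisfying the TCC (as in the preceding statements, $f''\le 0$ is what feeds the hypotheses of Theorem \ref{maximales}). Theorem \ref{maximales} then makes $\Sigma_u$ totally geodesic; and because $u$ is bounded both from above and from below, $\Sigma_u$ is contained in a slab, in particular in an open slab, so the second assertion of Theorem \ref{maximales} forces $\Sigma_u$ to be a slice $\tau=u_0$ with $f'(u_0)=0$. As a slice corresponds to $u\equiv u_0$, this is precisely the claimed conclusion.

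I expect the main obstacle to be exactly this transfer of completeness and parabolicity once the global assumptions $\inf f>0$, $\sup f<\infty$ have been dropped. Everything rests on the image of a bounded $u$ having closure compactly contained in $I$, so that $f(u)$ is automatically pinched between two positive constants; were that closure allowed to reach $\partial I$, where $f$ may degenerate to $0$, both Lemma \ref{complete} and the parabolicity transfer of \cite{RRS} could break down. This compactness point, rather than any new computation, is the crux of the argument.
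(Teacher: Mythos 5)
Your route is exactly the paper's: the corollary carries no proof of its own there and is meant to follow from the preceding theorem, which in turn is proved by combining Lemma \ref{complete} with Theorem \ref{maximales}; the intended reduction is precisely the one you describe, namely that boundedness of $u$ supplies along the graph the bounds on $f(u)$ that the preceding theorem imposes on all of $I$, places $\Sigma_u$ in a slab, and lets the second assertion of Theorem \ref{maximales} force $\Sigma_u$ to be a slice $\tau=u_0$ with $f'(u_0)=0$. Your identification of the PDE as $H(u)=0$, of (E.2) with the bound (\ref{const}) on $\cosh\varphi$, and your use of \cite{RRS} for parabolicity all match the paper.

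The one step you assert without proof is the very point you flag at the end as the crux: from ``$u$ bounded'' you conclude that $u(F)$ lies in a compact subinterval $[a,b]\subset I$. Since $I$ is open, this fails precisely when $\inf u$ or $\sup u$ coincides with a finite endpoint of $I$: take $I=(0,2)$ and $f(t)=t$ (positive, $f''\le 0$); a bounded solution with $\inf u=0$ would have $\inf f(u)=0$, so neither the completeness given by Lemma \ref{complete} nor the parabolicity transfer of \cite{RRS} is available, and $\Sigma_u$ need not lie in an open slab in the paper's sense (no $t_0\in I$ satisfies $\tau>t_0$ on $\Sigma_u$). Half of your claim is in fact automatic and worth separating out: a positive function with $f''\le 0$ is bounded above on any bounded subset of $I$ by its tangent line at an interior point, so $\sup f(u)<\infty$ always holds for bounded $u$; only $\inf f(u)>0$ and the open-slab condition genuinely require $\overline{u(F)}\subset I$. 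If ``bounded'' is read as ``$\Sigma_u$ lies between two slices $\{t_1\}\times F$ and $\{t_2\}\times F$ with $t_1,t_2\in I$'' --- the reading consistent with the slab terminology of Section \ref{s2} --- your compactness claim is immediate and the argument closes; as written, it is a gap you acknowledge but do not fill, and it is equally implicit in the paper. (A separate caveat, inherited from the paper rather than introduced by you: Theorem \ref{maximales} is stated under the TCC, which requires the fiber Ricci bound $\mathrm{Ric}^F\geq n(ff''-f'^2)g_{_F}$ in addition to $f''\leq 0$; neither the corollary nor the theorem preceding it records that extra hypothesis.)
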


If, moreover, $f=1$, we have
\begin{coro}
Let $(F,g)$ be a simply connected complete parabolic Riemannian
$n$-manifold, $n\geq 2$, whose Ricci curvature is non negative. Then
the only entire solutions to the equation
\[
\mathrm{ div}\,\left(\frac{ D u}{\sqrt{1-\mid
 D u\mid^2}}\right)=0
\]
\[
\mid Du\mid<\lambda, \quad 0<\lambda<1,
\]
which are bounded from above or below are the constants.
\end{coro}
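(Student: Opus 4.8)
The plan is to read this corollary as the $f\equiv 1$, $H=0$ instance of the maximal case and to reduce it to Corollary \ref{maximales2} through the graph $\Sigma_u$. First I would observe that when $f\equiv 1$ the displayed equation is exactly the maximal hypersurface equation $\mathrm{div}\bigl(Du/\sqrt{1-\mid Du\mid^2}\bigr)=0$, so any entire solution $u$ produces an entire maximal spacelike graph $\Sigma_u$ in the Lorentzian product $\overline{M}=I\times F$. The side condition $\mid Du\mid<\lambda$ with $0<\lambda<1$ becomes, via \rf{const}, the bound $\cosh\varphi<1/\sqrt{1-\lambda^2}$, so $\Sigma_u$ has bounded hyperbolic angle. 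Since the fiber has non-negative Ricci curvature and $f\equiv 1$, the ambient space satisfies the TCC.

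Next I would verify the two structural hypotheses needed for Corollary \ref{maximales2}, namely completeness and parabolicity of $\Sigma_u$. Completeness is immediate from Lemma \ref{complete}: the fiber $F$ is complete, $\inf f(u)=1>0$, and the hyperbolic angle is bounded. Parabolicity follows from the result quoted in Section \ref{parabolicity}: since $F$ is simply connected and parabolic, its universal Riemannian covering is $F$ itself and hence parabolic, while $f\equiv 1$ trivially gives $\sup f(\tau)<\infty$ and $\inf f(\tau)>0$; the bounded hyperbolic angle then forces $\Sigma_u$ to be parabolic.

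With completeness, parabolicity, bounded hyperbolic angle and the TCC in hand, Corollary \ref{maximales2} shows that $\Sigma_u$ is totally geodesic, and moreover that if $\Sigma_u$ is contained in an open slab it must be a slice $\tau=t_0$. Here the remaining hypothesis is that $u$ is bounded from above or from below, which places $\Sigma_u$ in a region $\tau<t_0$ (resp. $\tau>t_0$), i.e. in an open slab; hence $\Sigma_u=\{\tau=t_0\}$ and $u\equiv t_0$ is constant. I do not expect a genuine obstacle, since every tool is already established; the only points needing a line of care are checking that the simple connectivity of $F$ is what makes the parabolicity criterion of Section \ref{parabolicity} directly applicable, and that \emph{bounded from above or below} is precisely the open-slab condition required by Corollary \ref{maximales2}.
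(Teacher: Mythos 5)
Your proposal is correct and follows essentially the same route as the paper, which simply invokes Lemma \ref{complete} and Corollary \ref{maximales2}; you have merely made explicit the intermediate checks (bounded hyperbolic angle via \rf{const}, parabolicity of $\Sigma_u$ from the criterion of Section \ref{parabolicity}, and the identification of \emph{bounded above or below} with the open-slab condition) that the paper leaves implicit.
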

\begin{proof} It follows from Lemma \ref{complete} and Corollary
\ref{maximales2}. \hfill{$\Box$}
\end{proof}

\begin{ejem} Consider the Riemannian
product $\mathbb{S}^ 2\times \mathbb{R}^ 2$ of the sphere with its
usual metric and the Euclidean plane. Then the only entire solutions
which are bounded from above or below to the equation
\[
\mathrm{
div}\,\Big(\frac{ D u}{\sqrt{1-\mid
 D u\mid^2}}\Big)=-4H
\]
\[
\hspace{0.5cm} \mid Du\mid<\lambda, \quad 0<\lambda<1,
\] for
$H=0$, are the constants.

Moreover, if $H\in \mathbb{R}^*$, then there is no entire solutions.
\end{ejem}

\section*{Acknowledgments}
The first author is partially supported by the Spanish MICINN Grant
with FEDER funds MTM2010-19821. The second and third authors are
partially supported by the Spanish MICINN Grant with FEDER funds
MTM2010-18099. The third author is also partially supported by the
Junta de Andaluc\'\i a Regional Grant with FEDER funds P09-FQM-4496.

\end{document}